\newtheorem{theorem}{Theorem}[section]
\newtheorem{proposition}[theorem]{Proposition}
\newtheorem{proof}{\textmd{\textit{Proof.}}}
\newtheorem{remark}[theorem]{Remark}
\newtheorem{acknowledgement}{\textmd{\textit{Acknowledgements.}}}
\newcommand{\qedd}{\hfill \Box}
\newcommand{\R}{\ensuremath{\mathbb{R}}}
\date{}
\begin{document}
\title{Finslerian indicatrices as algebraic curves and surfaces}
\author{Pipatpong Chansri, Pattrawut Chansangiam and Sorin V. Sabau\footnote{Corresponding Author}\\
  % \quad \\ {\it  In memory of Prof. L. Tamassy}
} 
\maketitle

\begin{abstract}
  We show how to construct new Finsler metrics, in two and three dimensions, whose indicatrices are pedal curves or pedal surfaces of some other curves or surfaces. These Finsler metrics are generalizations of the famous slope metric, also called Matsumoto metric.

  Keywords: algebraic curves, pedal curves and surfaces, Finsler manifolds, curvature.
  
  MSC2010: 53C60, 14H50.
\end{abstract}

\let\thefootnote\relax\footnote{Balkan Journal of Geometry and Its Applications, Vol.25, No.1, 2020, pp. 19-33.}
\section{Introduction}

Finsler manifolds are natural generalizations of Riemannian manifolds in the same respect as normed spaces and Minkowski spaces are generalizations of Euclidean spaces.

In the case of the Euclidean space, or more general, of  Riemannian manifolds, the space looks uniform and isotropic, that is, the same in all direction. However, our daily experiences as well as the metrics and distances naturally appearing in applications to real life problems in Physics, Computer science, biology, etc. show that the space is not isotropic, there exists same preferred directions (see \cite{AIM}, \cite{MS}, \cite{SSS}, \cite{YS}).

To be more precise, we recall that a {\it Finsler metric} $(M,F)$ is given by specifying a Finsler norm $F:TM\to \mathbb{R}$ defined on the tangent space $(TM,M)$ of an $n$-dimensional manifold $M$. A Finsler norm has the following properties

\begin{enumerate}
\item $F$ is $C^{\infty}$ on $\widetilde{TM}:=TM\setminus\{O\}$, where $O$ is the zero section;
\item $F$ is $1$- positive homogeneous, i.e. $F(x,\lambda y)=\lambda\cdot F(x,y),\ \forall \lambda>0,\ (x,y)\in\ TM$;
\item $F$ is strongly convex, i.e. the Hessian $g_{ij}:=\frac{1}{2}\frac{\partial^2 F^2}{\partial y^i\partial y^j}(x,y)$ is positive defined for any $(x,y)\in \widetilde{TM}$.
\end{enumerate}

Observe that the fundamental function $F$ determines and it is determined by its indicatrix (the unit tangent bundle)
%\begin{equation*}
$\Sigma_F :=\{(x,y)\in TM:\ F(x,y)=1\}$,
%\end{equation*}
which is a smooth hypersurface of $TM$. For each point $x\in M,$ we can define the indicatrix at $x$ as 
%\begin{equation*}
$\Sigma_x :=\{y\in TM:\ F(x,y)=1\}=\Sigma_F \cap T_x M$,
%\end{equation*}
which is a smooth, closed, strictly convex hypersurface in $T_x M.$
It is therefore important to remark that to give a Finsler structure $F$ on a manifold $M$ is equivalent to giving a smooth hypersurface $\Sigma \hookrightarrow TM$ for which, the canonical projection $\pi:\Sigma \rightarrow M$ is a surjective submersion with the property that, for each $x\in M$, the $\pi$-fiber $\Sigma=\pi^{-1}(x)$ is a strictly convex hypersurface in $T_x M$ enclosing the origin. If $(M,F)$ is a Finsler manifold, then the restriction of $F$ to each tangent space $T_x M$ induce a Minkowski norm in $T_x M$. To give such a Minkowski norm is equivalent to giving the indicatrix $\Sigma_x$ at $x$. A Finsler structure on $M$ is a family of Minkowski norms $(F_x,T_x M)$ moving smoothly on the manifold.

From now on, we are going to regard Finsler and Minkowski norms as hypersurfaces in $TM$ and $T_xM$, respectively. With this image in mind, constructing examples of Finsler manifolds or Minkowski norms reduce to the effective construction of the hypersurfaces $\Sigma$ and $\Sigma_x$, respectively. Observe that the central symmetric spheres or ellipsoids give Riemannian metrics since they are all quadratic forms in the fiber coordinate $y$ of $T_xM$, hence we need to construct simple hypersurface which are not quadratic forms in $y's$.

Even though there exists already a lot of literature about Finsler manifolds and indicatrices (\cite{U}), as well as about the pedal curves (\cite{TT}) and algebraic curves in general (\cite{G}), our approach, we reconsider this topic in modern terminology, aiming to provide new insights into the theory of  Finsler spaces.

For instance, recall that the Randers and Kropina metrics are obtained by a rigid translation of the unit sphere such that the origin is enclosed by it or it is included in its boundary, respectively. %In the 2- dimensional case, they are given by the curves shown in Figure 1.
%\begin{figure}[h]
%%\setlength{\unitlength}{0.5cm}
%\centering
%\begin{picture}(30,8)
%\put(5,7.5){$y^2$}
%\put(10,1.4){$y^1$}
%Randers
%\put(3,2){\vector(1,0){7}}
%\put(5,0){\vector(0,1){7}}
%%\put(5,5){\circle{4}}
%\put(6,3){\circle{4}}
%%\put(5,5){\vector(1,1){1}}
%%\put(4.8,4.8){$\bullet$}
%\put(6,3){\circle*{0.1}}
%%\put(5,5.8){$W$}
%\put(0,7){$T_x M$}
%Kropina
%\put(15,7){$T_x M$}
%\multiput(0,0)(0,1){11}{\line(1,0){10}}
%\multiput(0,0)(1,0){11}{\line(0,1){10}}
%\put(18,2){\vector(1,0){7}}
%\put(20,0){\vector(0,1){7}}
%%\put(20,5){\circle{4}}
%\put(21.5,3.5){\circle{4}}
%\put(20,5){\vector(1,1){1.5}}
%\put(19.8,4.8){$\bullet$}
%\put(21.5,3.5){\circle*{0.1}}
%\put(20,7.5){$y^2$}
%\put(25,1.4){$y^1$}
%\put(22.5,2.5){\line(-1,1){5}}
%\end{picture}
%      \caption{The indicatrices of a Randers (left) and Kropina metric (right).}
%      \label{Randers_indicatrix}
%\end{figure}
We point out that Kropina metrics are actually conic Finsler metrics (see \cite{YS}, \cite{PCS} for details).

Another similar example of Finsler metric is the slope metric (see \cite{PCS}, \cite{M1}), where, in the $2$ dimensional case, the curve indicatrix is a {\it lima\c{c}on.}
%\begin{equation*}
%\left[(y^1)^2 +(y^2)^2-ay^1\right]^2=b^2\left[(y^1)^2 +(y^2)^2\right]^2,
%\end{equation*}
%here $a,b$ are positive constants, with the convexity condition is $b>a$.
The associated Finsler norm is written in the general form $F=\frac{\alpha^2}{\alpha-\beta}$ and called the {\it slope metric} (or a Matsumoto metric), where $\alpha=\sqrt{a_{ij}y^iy^j}$ is a Riemannian metric and $\beta=b_iy^j$ is a linear $1$-form. In \cite{PCS} we have studied the geometry of the slope metric globally induced on a surface of revolution. 

On the other hand, let us observe that a lima\c{c}on is an algebraic curve obtained as the pedal curve  of a circle
% of center $(a,0)$ and radius $b$
with respect to the origin. % $O(a,0)$.
This insight opens a new perspective on indicatrices i.e. Finsler metrics, as algebraic curves. In the three (or higher) dimensional case it is also possible to regard indicatrices as hyper-surfaces.

In the present paper we study the following problems:
\begin{enumerate}
\item How to construct two dimensional Finsler metrics whose indicatrices are pedal curves of some algebraic curves as generalization of the slope metric and point out the convexity conditions of the pedal curves. In special we will consider the case of pedal of conics.
\item How to extend this study to the three dimensional case (and arbitrary dimensional case). This study is new in the sense that indicatrices of three dimensional slope metrics are studied for the first time. From algebraic point of view the geometry of pedal surfaces is also an interesting topic.
\end{enumerate}

Arbitrary dimension Finsler metrics whose indicatrices are pedal hypersurfaces can be studied in a similar manner, but the concrete computations can be quite messy. %, but we restrict in the present paper to the two and three dimensional cases only.

Finally, we point out that our approach is important because it illustrates and clarifies the geometrical meaning of three (and higher) dimensional slope metrics, called Matsumoto metrics in the arbitrary dimensional case. Indeed, initially, the two-dimensional slope metric was defined by Matsumoto as the Finsler metric whose indicatrix is a lima\c{c}on (see \cite{M1}). After seeing that these Finsler metrics are of the type $F=\frac{\alpha^2}{\alpha-\beta}$, where $\alpha$ is a Riemannian metric and $\beta$ a linear one form, they were simply generalized to the arbitrary dimensional case without any further considerations on the geometrical meaning. By using our pedal curves and surfaces approach one can see that the higher dimensional Matsumoto metrics are those Finsler metrics whose indicatrices are pedal hypersurfaces of spheres. 
%\bigskip
%  Here is the summary of the present paper.
%  TO BE ADDED
\begin{acknowledgement}
  We are extremely grateful to Prof. H. Shimada for many useful discussions during the preparation of this manuscript.

  This research was supported by King Mongkut's Institute of Technology Ladkrabang Research Fund, grant no. KREF046201.
  \end{acknowledgement}

%%%%%%%%%%%%%%%%%%%%%%%%%%%%%%%%%%%%%%%%%%%%%%%%%%%%%%%%%
\section{The pedal curve of a plane algebraic curve}

Let us consider a plane algebraic curve $(\mathcal C)$ given in 
%Let us recall that $(\mathcal C)$ is an algebraic curve in plane given by $f(x,y)=0$, if $f(x,y)$ is a real polynomial in the variables $x,y$, i.e.
%$$f(x,y)=\sum_{ij}a_{ij}x^iy^j,$$
%where the sum is finite and $a_{ij}$ are real numbers. If we write $(\mathcal C)$ in
parametric form

\begin{equation}\label{parametric form 1}(\mathcal C):\quad
%\begin{cases}
x=x(t),\quad 
y=y(t),
%\end{cases}
\end{equation}
then, at regular values of the parameter $t$, the tangent line to $(\mathcal C)$ is
\begin{equation}\label{tangent line eq}(\ell):\quad
\dot y(t)\cdot x -\dot x(t)\cdot y + \{\dot x(t)\cdot y (t)-x(t)\cdot \dot y(t)\}=0,
\end{equation}
 and the orthogonal line to $(\ell)$ through a point $P(x_0,y_0)$ is given by
\begin{equation*}
(\ell)^\perp:\quad y-y_0=-\frac{\dot{x}(t)}{\dot{y}(t)}(x-x_0),
\end{equation*}
where dots represent the derivative of a function of one variable with respect to $t$.

For a regular plane algebraic curve  $(\mathcal C)$, and a fixed point  $P(x_0,y_0)$, called the {\it pedal point}, the {\it pedal curve} of the curve $(\mathcal C)$ with respect to $P$ is the parameterized curve obtained by associating to the parameter $t$ the orthogonal projection $p(t)$ of $P$ onto the tangent line at $t$ (see %Figure \ref{Fig2}, and
\cite{G}, \cite{TT} for details on algebraic curves).
The pedal curves are considered important in geometrical optics and kinematics. 
%\begin{figure}[h]
%%\setlength{\unitlength}{0.5cm}
%\centering
%\setlength{\unitlength}{1cm}
%\begin{picture}(1.6,5)(1.6,0)
%\qbezier(0,0)(1,2)(3,2)

%\put(2.6,2.5){$p(t)$}
%\put(2.65,2.22){\circle*{0.1}}%

%\multiput(2.6,2.3)(-0.15,0.35){7}
%{\line(-0.3,0.7){.05}}
%\put(0,0.9){\line(2,1){5}}
%\put(1.6,1.7){\circle*{0.1}}
%\put(1.6,1.3){$c(t)$}
%\put(3,1.6){$(\mathcal C)$}

%\put(1.7,4.6){$P(x_0,y_0)$}
%\put(1.61,4.6){\circle*{0.1}}

%\put(1.7,4.6){$p(\alpha,\beta)$}
%\linethickness{.075mm}
%\linethickness{.2mm}
%\end{picture}
%\caption{The pedal curve of $(\mathcal C)$ with respect to $P(x_0,y_0)$.}\label{Fig2}
%\end{figure}

%For the curve $(\mathcal C)$ in the parameterization \eqref{parametric form 1}, we denote by $T(t)$ and $N$the unit tangent vector and the unit normal vectors are
%\begin{equation}\label{tangent and normal vector in r2}
%\begin{split}
%T(t)&=\frac{dc/dt}{|dc/dt|}=\frac{(\dot{x}(t),\dot{y}(t))}{\sqrt{\dot{x}(t)^2+\dot{y}(t)^2}},\\
%N(t)&=\frac{(-\dot{y}(t),\dot{x}(t))}{\sqrt{\dot{x}(t)^2+\dot{y}(t)^2}},
%\end{split}
%\end{equation}
%respectively. Here $|\cdot|$ is the usual Euclidean norm in $\R^2$. It is known
We recall that the moving equation of the Frenet frame $(T(t),N(t))$ along $(\mathcal C)$ are given by
\begin{equation*}
\begin{cases}
\dfrac{dT}{dt}=|c'(t)|\cdot k_c\cdot N(t)\\
\ \\
\dfrac{dN}{dt}=-|c'(t)|\cdot k_c\cdot T(t),
\end{cases}
\end{equation*}
where $(T(t), N(t))$ are the unit tangent and normal vectors along $c$, respectively, $|c'(t)|=\sqrt{\dot{x}(t)^2+\dot{y}(t)^2}$ is the speed of $(\mathcal C)$ and
\begin{equation*}
k_c:=\frac{\langle c''(t),N(t)\rangle}{|c'(t)|^2}
\end{equation*}
is the curvature of the curve $(\mathcal C)$. Here $\langle\cdot,\cdot\rangle$ is the usual inner product of the Euclidean plane.

 A straighforward computation shows that 
%It follows
%\begin{theorem}\label{Theorem 2.1}
%  Let $(\mathcal C)$ be an algebraic plane curve in parametric form $(2.1)$ and $P(x_0,y_0)$ an arbitrary point in $\mathbb{R}^2,\ P\notin (\mathcal C)$. Then
  the pedal curve of $(\mathcal C)$ with respect to the point $P$ is given by
\begin{equation}\label{2.4}
  p(t)=\langle c-r_0,N\rangle\cdot N+r_0,
%  \langle c,N\rangle\cdot N+\langle r_0,T\rangle \cdot T,
\end{equation}
where we denote by $r_0$ the position vectors of the point $P(x_0,y_0)$.

\begin{proposition}
  If $(\mathcal C)$ is a continuously differentiable closed curve in plane, then its pedal
  curve $(\mathcal P):p=p(t)$ with respect to a point $P(x_0,y_0)$ is also a continuously differentiable closed curve in plane.
\end{proposition}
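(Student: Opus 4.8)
The plan is to work directly from the explicit representation \eqref{2.4} of the pedal curve, $p(t)=\langle c(t)-r_0,N(t)\rangle\cdot N(t)+r_0$, and to verify the three defining features of a continuously differentiable closed curve separately: that $p$ is well defined for every $t$, that it is periodic, and that it is $C^1$. Throughout I regard $(\mathcal C)$ as a \emph{regular} curve, so that $c'(t)\neq 0$ and the unit tangent $T=c'/|c'|$ — and hence the unit normal $N$, obtained from $T$ by a fixed rotation through $\pi/2$ — is well defined.

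First I would dispose of closedness. Since $(\mathcal C)$ is closed, its parametrization is periodic, say $c(t+L)=c(t)$ with $c'(t+L)=c'(t)$ for all $t$; consequently $T$ and $N$ are $L$-periodic as well. Feeding this into \eqref{2.4}, and noting that $r_0$ is constant, gives $p(t+L)=p(t)$, so $(\mathcal P)$ is again a closed curve. Continuity is equally direct: $c$ is continuous, $T=c'/|c'|$ is continuous because $c'$ is continuous and non-vanishing, $N$ is a continuous (linear, orthogonal) image of $T$, and the scalar $\langle c-r_0,N\rangle$ together with the vector it multiplies are then continuous; hence $p$ is continuous as a composition of continuous maps.

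The differentiability is the delicate point, and I expect it to be the main obstacle. Differentiating \eqref{2.4} gives $p'=\frac{d}{dt}\langle c-r_0,N\rangle\cdot N+\langle c-r_0,N\rangle\cdot N'$, and because $c'=|c'|T$ is orthogonal to $N$ the term $\langle c',N\rangle$ drops out, leaving $\frac{d}{dt}\langle c-r_0,N\rangle=\langle c-r_0,N'\rangle$. Both surviving terms therefore involve $N'$, and by the Frenet relation $dN/dt=-|c'|\cdot k_c\cdot T$ this brings in the curvature $k_c=\langle c'',N\rangle/|c'|^2$. So the natural formula for $p'$ secretly uses the \emph{second} derivative of $c$, which a merely $C^1$ hypothesis does not supply. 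This is exactly the gap I would have to close: either by invoking that in the intended setting $(\mathcal C)$ is an algebraic curve, hence smooth (indeed analytic) at its regular points, so that $k_c$ exists and is continuous, or by a direct estimate on the foot-of-perpendicular map showing that $p'$ exists under the weakest available regularity. Once $p'$ is shown to exist, its continuity follows from that of $|c'|$, $T$, $N$ and $k_c$, and its $L$-periodicity from that of all these ingredients; combined with the first two steps, this yields that $(\mathcal P)$ is a continuously differentiable closed curve.
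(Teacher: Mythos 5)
Your proposal follows essentially the same route as the paper's proof: periodicity of $c$, $T$, $N$ fed through \eqref{2.4} gives closedness of $(\mathcal P)$, and differentiating \eqref{2.4} via the Frenet equations yields exactly the paper's derivative formula \eqref{2.5}. The regularity issue you flag --- that $\dot{p}$ involves $k_c$, hence $c''$, which a merely $C^1$ hypothesis does not supply --- is genuine, but it is equally present in the paper's own proof, which writes \eqref{2.5} in terms of $k_c$ without comment; your suggestion to close it by invoking smoothness (indeed analyticity) of the algebraic curves actually used in the sequel is a sound repair.
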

\begin{proof}
From hypothesis, after some rescaling of the parameter $t$, we have
$c(0)=c(2\pi),\ \dot{c}(0)=\dot{c}(2\pi),$
and hence
$T(0)=T(\pi),\ N(0)=N(\pi).$
Using now \eqref{2.4} it follows
$p(0)=p(2\pi),$
%\end{proof}
i.e. $p$ is also periodic with the some period as $(\mathcal C)$. Moreover, $\dot{p}(0)=2\pi$, where by derivation of \eqref{2.4} we get
\begin{equation}\label{2.5}
\dot{p}(t)=k_c\cdot |c'|\cdot \left[\langle r_0-c,T\rangle\cdot N+\langle r_0-c,N \rangle\cdot T \right].
\end{equation}

$\qedd$
\end{proof}

We will ask now the question if the pedal curve $p(t)$ goes though the origin $O(0,0)$ of $\mathbb{R}^2$. This is equivalent to asking if the vectorial equation
\begin{equation*}
p(t)=\langle c,N \rangle\cdot N+\langle r_0,T\rangle\cdot T=(0,0)^t,\ \text{where $t$ denote the transposed matrix,}
\end{equation*}
has solution. Since $N$ and $T$ linearly independent, this equation is equivalent to the system of equation,
\begin{equation}\label{2.6}
%\begin{split}
\langle c,N\rangle=0,\quad 
\langle r_0,N\rangle=0.
% \end{split}
\end{equation}
We consider
\begin{enumerate}
\item[Case 1.] Assume the pedal point is origin, i.e. $r_0=(0,0)$. In this case we get only the equation
\begin{equation*}
\langle c,N\rangle=-\left| \begin{array}{cc} c_1 & c_2\\ \dot{c}_1 & \dot{c}_2
\end{array}  \right|=0,
\end{equation*}
and observe that for a continuous differentiable curve this is possible if and only if
$(\mathcal C)$ passes through origin.

Hence, in this case $(\mathcal P)$ passes through origin if and only if $(\mathcal C)$ passes through origin.
\item[Case 2.] Assume the pedal point $P$ is not the origin, i.e.  $x_0\neq 0$ or $y_0 \neq 0$. Then we consider further the cases:
\begin{enumerate}
\item[2.1] The curve $(\mathcal C)$ passes through origin, i.e. there exists $t_0$ to such that $c(t_0)=(0,0)$, $\dot{c}(t_0)\neq (0,0)$. In this case we obtain $\langle r_0,T(t_0)\rangle =0$, i.e. $r_0$ and $T(t_0)$ are orthogonal.
\item[2.2] The curve $(\mathcal C)$ do not pass through the origin, i.e. both conditions in \eqref{2.6} must be simultaneously verified, but this is impossible. Indeed, since $(\mathcal C)$ is continuously differentiable curve, $c(t)$ cannot be collinear to $T(t)$, nor $r_0$ can be always orthogonal to $T$ since $(\mathcal C)$ is a closed curve.
\end{enumerate}
\end{enumerate}
We conclude:
\begin{theorem}\label{theorem 2.3}
Let $\mathcal C)$ be a plane algebraic curve with parametric equation \eqref{parametric form 1} and $P(x_0,y_0)$ a point in $\mathbb{R}^2$, $P\notin (\mathcal C)$.
\begin{enumerate}
\item[$\bullet$]If $P$ is the origin, then $(\mathcal P)$ passes through origin if and only if $(\mathcal C)$ also passes through origin.
\item[$\bullet$] If $P$ is not the origin, and $(\mathcal C)$ passes through the origin then $(\mathcal P)$ passes through origin if and only if $r_0$ and $T(t_0)$ are orthogonal, where $t_0$ is the value of the parameter $t$ such that $c(t_0)=(0,0)$.
\item[$\bullet$] If $P$ is not the origin, and $(\mathcal C)$ does not pass through origin, then $(\mathcal P)$ does not pass origin either.
\end{enumerate}
\end{theorem}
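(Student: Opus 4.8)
The plan is to reduce the geometric question ``does $(\mathcal P)$ pass through the origin?'' to the solvability of the single vectorial equation $p(t)=(0,0)$, and then to read off the answer from the Frenet decomposition of $p$. First I would rewrite \eqref{2.4} in the moving frame. Expanding $r_0=\langle r_0,T\rangle\,T+\langle r_0,N\rangle\,N$ and substituting into $p=\langle c-r_0,N\rangle\,N+r_0$, the two contributions proportional to $N$ that carry $\langle r_0,N\rangle$ cancel, leaving the clean expression $p(t)=\langle c,N\rangle\,N+\langle r_0,T\rangle\,T$. Since $\{T(t),N(t)\}$ is an orthonormal, hence linearly independent, basis of $\mathbb{R}^2$ at every regular value of $t$, the equation $p(t)=(0,0)$ is equivalent to the scalar system \eqref{2.6}, namely $\langle c,N\rangle=0$ and $\langle r_0,T\rangle=0$.

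With this reduction in hand, I would split into the three regimes of the statement. When $P$ is the origin, $r_0=0$ forces the second equation to hold identically, so only $\langle c,N\rangle=0$ survives; writing this as the determinant condition $c_1\dot c_2-c_2\dot c_1=0$ and using that $c$ is $C^1$, its vanishing singles out exactly the instants at which the position vector is aligned with the tangent, which for the present configuration corresponds to $(\mathcal C)$ meeting the origin. When $P\neq O$ and $(\mathcal C)$ passes through the origin at $t_0$ (so $c(t_0)=0$, $\dot c(t_0)\neq0$), the first equation is automatically satisfied at $t_0$, and the system collapses to the single orthogonality condition $\langle r_0,T(t_0)\rangle=0$, i.e. $r_0\perp T(t_0)$; this yields the second bullet.

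The delicate case is $P\neq O$ together with $(\mathcal C)$ avoiding the origin, where one must show the system \eqref{2.6} has no simultaneous solution. Here I would argue that $\langle c,N\rangle=0$ forces the position vector $c(t)$ to be collinear with $T(t)$, while $\langle r_0,T\rangle=0$ forces the fixed vector $r_0$ to be orthogonal to $T(t)$, and that these two requirements cannot be met at a common parameter for a closed $C^1$ curve missing the origin. This non-existence step is the main obstacle: it is the only place where closedness and the hypothesis $P\notin(\mathcal C)$ are genuinely invoked, and a rigorous treatment calls for a topological or convexity argument on the turning of $T$ along the closed curve rather than the purely algebraic cancellations used in the first two cases. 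Once these three cases are settled, collecting them gives precisely the three bullet points of Theorem~\ref{theorem 2.3}.
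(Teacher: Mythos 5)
Your reduction is exactly the paper's: expand $r_0$ in the orthonormal frame $\{T,N\}$ to obtain $p(t)=\langle c,N\rangle\,N+\langle r_0,T\rangle\,T$, then read $p(t)=(0,0)$ as the scalar system $\langle c,N\rangle=0$, $\langle r_0,T\rangle=0$ (you even silently correct the paper's display \eqref{2.6}, which misprints the second equation as $\langle r_0,N\rangle=0$ but uses it later as $\langle r_0,T\rangle=0$). Your treatment of the case $P=O$ and of the case $P\neq O$ with $c(t_0)=(0,0)$ coincides with the paper's Case 1 and Case 2.1.

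The genuine gap is the third bullet: you explicitly defer it to ``a topological or convexity argument on the turning of $T$'' that you never supply, so one third of the theorem is left unproved. For comparison, the paper settles its Case 2.2 in two lines, asserting that ``$c(t)$ cannot be collinear to $T(t)$, nor $r_0$ can be always orthogonal to $T$''; so you have lost nothing relative to the paper, and your instinct that this step is the delicate one is correct --- in fact, no argument can close it as stated, because the two equations \emph{can} hold simultaneously under the stated hypotheses. Take $(\mathcal C)$ to be the circle $c(t)=(2+\cos t,\sin t)$ and $t_0=2\pi/3$: then $\langle c(t_0),N(t_0)\rangle=2\cos t_0+1=0$ (the tangent line at $t_0$ passes through the origin, as happens whenever the origin lies outside a closed convex curve), and the point $P=(-1/2,\sqrt{3}/2)$, which is neither the origin nor on $(\mathcal C)$, satisfies $\langle r_0,T(t_0)\rangle=0$; hence $p(t_0)=(0,0)$ although $(\mathcal C)$ misses the origin and $P\neq O$. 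The same conflation weakens your first case (and the paper's): $\langle c,N\rangle=0$ means the tangent line at $t$ passes through the origin, not that the curve does, so for the circle above with pedal point at the origin the pedal meets $O$ while $(\mathcal C)$ does not. Both the first and third bullets become correct only under an additional hypothesis --- for example that $(\mathcal C)$ is convex and encloses the origin, which is the situation relevant for indicatrices --- since then no tangent line passes through the origin and the system \eqref{2.6} is unsolvable; any complete repair of your proof (or of the paper's) has to introduce such a hypothesis rather than look for a cleverer topological argument.
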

Next, we will study the convexity condition of the pedal curve $(\mathcal P)$ in \eqref{2.4}. We recall that a curve $p=p(t)$ is strongly convex if and only if
\begin{equation}\label{2.7}
\frac{\dot{p}(t)\times \ddot{p}(t)}{p(t)\times\dot{p}(t)}>0,
\end{equation}
where the cross product of two vector $u=(a,b),\ v=(c,d)$ is given by $u\times v=ad-bc$, (see for instance \cite{BCS}, page 88).
%\begin{equation*}u\times v=
%\left| \begin{array}{cc} a & b\\ c & d
%\end{array}  \right|=ad-bc\quad \text{(see for instance \cite{BCS}, page 88)}.
%\end{equation*}
Observe that this condition is independent on the parameterization of $p(t)$. A straightforward computation gives
%\begin{equation*}\begin{split}
%\ddot{p}(t)=&\left[(k_c|\dot{c}|)'\langle r_0-c,T\rangle+k_c|\dot{c}|^2[-1+2k_c\langle r_0-c,N\rangle]\right]\cdot N\\
%&+\left[(k_c|\dot{c}|)'\langle r_0-c,N\rangle-2k_c^2|\dot{c}|^2\cdot \langle r_0-c,T\rangle \right]\cdot T.
%\end{split}\end{equation*}
%By taking into account that the vectors $p(t)$, $\dot{p}(t)$ and $\ddot{p}(t)$ can be written in terms of $T(t)$ and $N(t)$, 
%We can denote
%\begin{equation*}\begin{split}
%p(t)&=v_0\cdot N+u_0\cdot T,\\
%\dot{p}(t)&=v_1\cdot N+u_1\cdot T,\\
%\ddot{p}(t)&=v_2\cdot N+u_2\cdot T,
%\end{split}\end{equation*}
% hence
%it results 
\begin{equation*}
\begin{split}
\frac{\dot{p}\times\ddot{p}}{p\times\dot{p}}&=\frac{\left|\begin{array}{cc} v_1 & v_2\\ u_1 & u_2
\end{array}  \right|}{\left|\begin{array}{cc} v_0 & v_1\\ u_0 & u_1
\end{array}  \right|}
=\frac{k_c|\dot{c}|^2\left|\begin{array}{cc} \langle r_0-c,T\rangle & -1+2k_c\langle r_0-c,N\rangle\\ \langle r_0-c,N\rangle & -2k_c\langle r_0-c,T\rangle
\end{array}  \right|}{\left|\begin{array}{cc} \langle c,N\rangle & \langle r_0-c,N\rangle\\ \langle r_0,T\rangle & \langle r_0-c,T\rangle
\end{array}  \right|}\\
&=\frac{k_c|\dot{c}|^2\left[-2k_c\langle r_0-c,T \rangle^2+\langle r_0-c,N \rangle-2k_c\langle r_0-c,N \rangle^2 \right]}{\langle c,N\rangle\langle r_0-c,N\rangle -\langle r_0,T\rangle\langle r_0-c,T\rangle},
\end{split}
\end{equation*}
where we have used
$\dot{p}(t)=-k_c\cdot |\dot{c}|^2\{\langle c,T\rangle\cdot N+ \langle c,N\rangle\cdot T \}.$
Hence, we obtain
\begin{theorem}
The pedal curve $(\mathcal P)$ is strongly convex if and only if
\begin{equation}\label{2.10}
\begin{split}
k_p(t):=\frac{k_c\{-2k_c\langle r_0-c,T \rangle^2+\langle r_0-c,N \rangle-2k_c\langle r_0-c,N \rangle^2\}}{\langle c,N\rangle\langle r_0,N\rangle+\langle c,T\rangle\langle r_0,T\rangle -\langle c,N\rangle^2-\langle r_0,T\rangle^2}>0,
\end{split}
\end{equation}
for all $t\in [0,2\pi).$
\end{theorem}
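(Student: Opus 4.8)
The plan is to start directly from the strong-convexity criterion \eqref{2.7}, namely that $(\mathcal P)$ is strongly convex if and only if $\frac{\dot p\times\ddot p}{p\times\dot p}>0$ for all $t$, and to evaluate the two cross products explicitly in the moving Frenet frame $(T,N)$ of $(\mathcal C)$. Using the decomposition $p=\langle c,N\rangle N+\langle r_0,T\rangle T$ which follows from \eqref{2.4} (after writing $r_0=\langle r_0,T\rangle T+\langle r_0,N\rangle N$), together with $\dot c=|\dot c|T$ and the Frenet equations $\dot T=|\dot c|k_c N$, $\dot N=-|\dot c|k_c T$, I would recompute $\dot p$ and confirm the formula $\dot p=|\dot c|k_c\,w$, where $w:=\langle r_0-c,T\rangle N+\langle r_0-c,N\rangle T$, already recorded in the proof of the Proposition. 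The first substantive step is then the differentiation of $w$: applying the Frenet equations once more to the scalar coefficients $\langle r_0-c,T\rangle$ and $\langle r_0-c,N\rangle$ produces a vector $\dot w$ whose coefficients are linear in $k_c$, which is precisely the source of the quadratic $k_c^2$ terms appearing in the final numerator.

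The key simplification I would exploit is that $\dot p$ is a scalar multiple of $w$, so when forming $\dot p\times\ddot p=\dot p\times\frac{d}{dt}\!\left(|\dot c|k_c\,w\right)$ the term carrying $\frac{d}{dt}(|\dot c|k_c)$ is parallel to $\dot p$ and drops out of the cross product; hence $\dot p\times\ddot p=(|\dot c|k_c)^2\,(w\times\dot w)$. This is the computational content of the earlier remark that the criterion \eqref{2.7} is independent of parameterization, and it disposes of the otherwise awkward derivatives of $|\dot c|$ and $k_c$. Since $(T,N)$ is a positively oriented orthonormal frame, each cross product reduces to the $2\times2$ determinant of the coefficient matrix, yielding exactly the determinant expressions displayed in the text; evaluating them gives numerator $|\dot c|^2k_c\{-2k_c\langle r_0-c,T\rangle^2+\langle r_0-c,N\rangle-2k_c\langle r_0-c,N\rangle^2\}$ and denominator $\langle c,N\rangle\langle r_0-c,N\rangle-\langle r_0,T\rangle\langle r_0-c,T\rangle$, up to a common sign fixed by orientation.

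Finally, since $|\dot c|^2>0$ at regular values, it may be discarded without affecting the sign of the quotient, so $\frac{\dot p\times\ddot p}{p\times\dot p}>0$ is equivalent to $k_p(t)>0$. It then remains only to rewrite the denominator in the stated form: expanding by bilinearity of $\langle\cdot,\cdot\rangle$ gives $\langle c,N\rangle\langle r_0-c,N\rangle-\langle r_0,T\rangle\langle r_0-c,T\rangle=\langle c,N\rangle\langle r_0,N\rangle+\langle c,T\rangle\langle r_0,T\rangle-\langle c,N\rangle^2-\langle r_0,T\rangle^2$, which matches \eqref{2.10}. I expect the main obstacle to be purely bookkeeping, namely carrying out the second differentiation of $w$ correctly and tracking the orientation-dependent signs in both cross products so that numerator and denominator are negated consistently and the displayed quotient is recovered; the conceptual content reduces entirely to the vanishing of the contribution $w\parallel\dot p$ noted above.
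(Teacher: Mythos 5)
Your proposal is correct and follows essentially the same route as the paper: starting from the criterion \eqref{2.7}, computing $\dot p$ and $\ddot p$ in the Frenet frame via \eqref{2.5}, reducing both cross products to $2\times 2$ determinants of coefficients, cancelling the common factor, and expanding the denominator by bilinearity to reach \eqref{2.10}. The only difference is presentational: you justify explicitly why the $\tfrac{d}{dt}\bigl(|\dot c|\,k_c\bigr)$ term drops out of $\dot p\times\ddot p$ (the component parallel to $\dot p$ contributes nothing), a point the paper leaves implicit behind its remark that \eqref{2.7} is parameterization-independent.
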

\begin{remark}$ $
%\begin{enumerate}
%\item
  In the case when the pedal point $P$ is the origin, formula \eqref{2.10} simplifies to
  \begin{equation}\label{simplif form}
    k_c\left[2k_c\langle c,T\rangle^2+\langle c,N\rangle+2k_c\langle c,N\rangle^2\right]>0.
  \end{equation}
  
Moreover, observe that the position vector $c(t)$ can be decomposed in the orthonormal basis $\{T, N\}$ as
$
c(t)=\langle c(t),T(t)\rangle T(t)+\langle c(t),N(t)\rangle N(t),
$
hence
$
\langle c(t),c(t)\rangle= \langle c(t),T(t)\rangle^2+\langle c(t),N(t)\rangle^2.
$
It is now easy to see now that the formula \eqref{simplif form} is equivalent to 
$k_c\left[2k_c|c|^2+\langle c,N\rangle\right]>0.$

%$T=\frac{(\dot{x},\dot{y})}{\sqrt{\dot{x}^2+\dot{y}^2}}$ and $N=\frac{(-\dot{y},\dot{x})}{\sqrt{\dot{x}^2+\dot{y}^2}}$
%\begin{equation*}
%\begin{split}
%\langle c,T\rangle^2+\langle c,N\rangle^2=&\frac{(x\dot{x}+y\dot{y})^2}{\dot{x}^2+\dot{y}^2}+\frac{(-x\dot{y}+y\dot{x})^%2}{\dot{x}^2+\dot{y}^2}\\
%=&\frac{x^2\dot{x}^2+2xy\dot{x}\dot{y}+y^2\dot{y}^2+x^2\dot{y}^2-2xy\dot{x}\dot{y}+y^2\dot{x}^2}{\dot{x}^2+\dot{y}^2}\\
%=&\frac{x^2\dot{x}^2+y^2\dot{y}^2+x^2\dot{y}^2+y^2\dot{x}^2}{\dot{x}^2+\dot{y}^2}\\
%=&\frac{(x^2+y^2)(\dot{y}^2+\dot{x}^2)}{\dot{x}^2+\dot{y}^2}=x^2+y^2=\langle c,c\rangle,\\
%\end{split}
%\end{equation*}
%thus
%$$k_c[2k_c|c|^2+\langle c,N\rangle]>0.$$
%$$k_c\langle c,2k_c c+N\rangle>0.$$

%\item Observe that if we assume $(C)$ to be convex, i.e. $k_c>0,$ and $(p)$ closed curve, then actually we only need to %check that
%$$2k_c|c|^2+\langle c,N\rangle$$
%has no zero for any value of the parameter $t$.
%\end{enumerate}
\end{remark}
%%%%%%%%%%%%%%%%%%%%%%%%%%%%%%%%%%%%%%%
\section{Some remarkable pedal curves and their corresponding Finsler metrics}
\subsection{The slope metric whose indicatrix is a lima\c{c}on}
\subsubsection{ The pedal curve}

%CHANGE a $\mapsto$ k AND b $\mapsto$ a. 
%
%\bigskip

It is easy to see that the pedal curve of the circle with center $(0,k)$ and radius $a$ with respect to the origin of $\mathbb{R}^2$ is a lima\c{c}on.

Indeed, the curve $(\mathcal C)$ is the circle $(x-k)^2+y^2=a^2$, then 
%with parametric equations
%\begin{equation*}(\mathcal C):\
%\begin{cases}
%x(t)=k+a\cos t\\
%y(t)=a\sin t
%\end{cases}, \quad t\in[0,2\pi),
%\end{equation*}
%then the Frenet frame along $(\mathcal C)$ is
%\begin{equation*}
%T(t)=\left( \begin{array}{c} -\sin t\\ \cos t
%\end{array} \right),\quad N(t)=\left( \begin{array}{c} -\cos t \\ -\sin t
%\end{array} \right),
%\end{equation*}
%and $|c'|=a,\ k_c=\frac{1}{a}$. Using $r_0=(0,0)$,
the equation \eqref{2.4} gives the pedal curve
%$p(t)=\langle c,N\rangle\cdot N,$
%with parametric equations
\begin{equation*}
\begin{cases}
p_1(t)=(a+k\cos t)\cdot \cot t\\
p_2(t)=(a+k\cos t)\cdot \sin t.
\end{cases}
\end{equation*}
This is equivalent with the polar equation
$r=a+k\cdot\cos \theta,$
where $(r,\theta)$ are the polar coordinates in $\mathbb{R}^2$, or the implicit equation
\begin{equation}\label{limacon implicit}
(x^2+y^2-kx)^2=a^2(x^2+y^2).
\end{equation}
Observe that the pedal pedal curve $(\mathcal P)$ is not passing through origin (Theorem \ref{theorem 2.3}). Moreover, the curvature of the lima\c{c}on reads now
\begin{equation*}
k_p(t)=\frac{a^2
  +3ak\cos t +2k^2}{a^2+2ak\cos t+k^2},
\end{equation*}
and since $a^2+2ak\cos t+k^2\geq a^2-2ak+k^2=(a-k)^2>0$, for $a\neq k$, 
the condition $k_p(t)>0$ is therefore equivalent to
%\begin{equation*}
$a^2
  +3ak\cos t +2k^2>0.$
%\end{equation*}
Observe that the minimum of this expression is obtained for $\cos t=-1,$ hence
%\begin{equation*}
$a^2
+3ak\cos t +2k^2>a^2
-3ak +2k^2=(a-k)(a-2k)>0,$
%\end{equation*}
for  $a>2k$.

This is the convexity condition for the pedal $p(t)$ in this case (see Figure \ref{fig1}). 

	\begin{figure}[ht]
    \centering
    \begin{subfigure}[b]{0.35\textwidth}
    \centering
        \includegraphics[width=\textwidth]{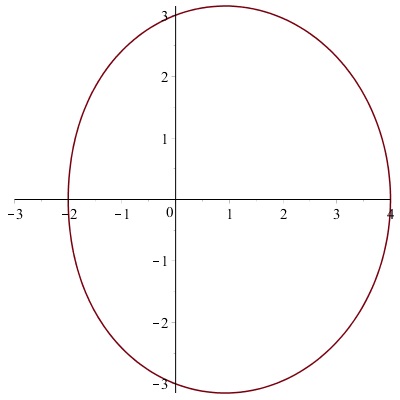}
       % \caption{A lima\c{c}on curve for $a=1$, $b=3$.}
    \end{subfigure}\quad\quad\quad\quad
          \begin{subfigure}[b]{0.35\textwidth}
          \centering
        \includegraphics[width=\textwidth]{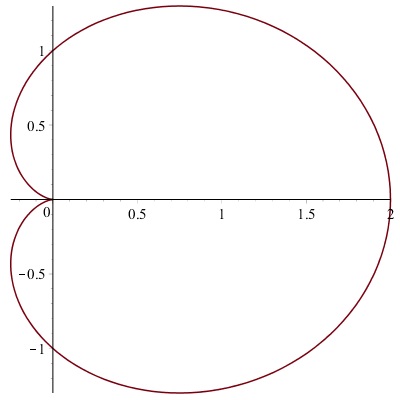}
       % \caption{A lima\c{c}on curve for $a=1,\ b=1$.  This curve not convex.}
        \end{subfigure}
    \caption{A convex lima\c{c}on curve for $a=3$, $k=1$ (left) and  a non-convex  lima\c{c}on curve for $a=1$, $k=1$ (right).}\label{fig1}
\end{figure}

%-----------------------
\subsubsection{The Finsler metric}
The Finsler metric whose indicatrix is given as a curve in each tangent space can easily determined. % by  Okubo's method.
Indeed, observe that the lima\c{c}on implicit equation \eqref{limacon implicit} is equivalent to
$$
\frac{x^2+y^2}{a\sqrt{x^2+y^2}+kx}=1,
$$
%for the pedal $(p)$ it follows that
hence the corresponding Minkowski norm in $\mathbb{R}^2$ is
\begin{equation*}
F(x,y)=\frac{x^2+y^2}{a\sqrt{x^2+y^2}+kx},
\end{equation*}
that is a Minkowski slope metric (see \cite{M1}, this approach is sometimes called the Okubo's method). By smoothly moving this Minkowski norm on a $2$- dimensional smooth manifold $M$ we get the usual slope metric on $M$
%\begin{equation*}
$F=\frac{\alpha^2}{\alpha-\beta},$
%\end{equation*}
where $\alpha$ is a Riemanninan metric $M$ and $\beta$ a linear $1$-form (see our recent paper \cite{PCS} for a study of the slope metric on a surface of revolution).

In conclusion (\cite{BCS}, \cite{M1}, \cite{PCS}):
\begin{proposition}\label{slope metric convex}
  The Finsler metric on a two dimensional manifold $M$ whose indicatrix is given by the pedal curve of a circle
  $(x-k)^2+y^2=a^2$ with origin as pedal point is a slope type metric $F=\frac{\alpha^2}{\alpha-\beta}$. This Finsler metric is strongly convex for $a>2k$.
  \end{proposition}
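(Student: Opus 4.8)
The plan is to establish the two assertions in turn: first the algebraic identification of $F$ as a slope metric, and then the strong convexity for $a>2k$.

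For the identification, I would begin from the implicit equation \eqref{limacon implicit} of the pedal curve, $(x^2+y^2-kx)^2=a^2(x^2+y^2)$, which realizes the indicatrix in each tangent plane. Selecting the branch on which $x^2+y^2-kx=a\sqrt{x^2+y^2}$ (the one giving a positive denominator, i.e. the outer lima\c{c}on enclosing the origin), this rewrites as
\begin{equation*}
\frac{x^2+y^2}{a\sqrt{x^2+y^2}+kx}=1,
\end{equation*}
so that the indicatrix is the level set $\{F=1\}$ of the degree-one positively homogeneous function $F(x,y)=\dfrac{x^2+y^2}{a\sqrt{x^2+y^2}+kx}$. It then remains to exhibit $\alpha$ and $\beta$ making this a slope metric: taking $\alpha=\tfrac1a\sqrt{x^2+y^2}$ (a rescaled Euclidean, hence Riemannian, norm) and $\beta=-\tfrac{k}{a^2}x$ (a linear $1$-form) gives $\alpha^2=\tfrac1{a^2}(x^2+y^2)$ and $\alpha-\beta=\tfrac1{a^2}(a\sqrt{x^2+y^2}+kx)$, whence $F=\alpha^2/(\alpha-\beta)$. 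Moving this Minkowski norm smoothly over $M$ produces the slope metric $F=\alpha^2/(\alpha-\beta)$ on $M$, as claimed.

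For the convexity statement, I would use that strong convexity of the Finsler norm $F$, i.e. positive-definiteness of its fundamental tensor $g_{ij}$, is equivalent to the strong (geometric) convexity of the indicatrix curve $(\mathcal P)$; this is the correspondence recorded in the convexity criterion \eqref{2.7}, which the preceding Theorem reduces to the pointwise condition $k_p(t)>0$ in \eqref{2.10}. Substituting the pedal curve of the circle into that condition yields the curvature of the lima\c{c}on computed above,
\begin{equation*}
k_p(t)=\frac{a^2+3ak\cos t+2k^2}{a^2+2ak\cos t+k^2}.
\end{equation*}
The denominator satisfies $a^2+2ak\cos t+k^2\ge(a-k)^2>0$ for $a\ne k$, so the sign of $k_p$ is governed by the numerator; since $3ak>0$, the numerator attains its minimum at $\cos t=-1$, equal to $a^2-3ak+2k^2=(a-k)(a-2k)$. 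Hence $k_p(t)>0$ for every $t\in[0,2\pi)$ precisely when this minimum is positive.

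The point requiring care is the interplay between positivity, smoothness, and convexity. For $F$ to be a genuine Minkowski norm its denominator must stay positive in all directions, and since $a\sqrt{x^2+y^2}+kx\ge(a-k)\sqrt{x^2+y^2}$ this holds exactly when $a>k$; the stronger inequality $a>2k$ then makes $(a-k)(a-2k)>0$, so the numerator above is positive for all $t$ and $(\mathcal P)$ is strongly convex. Under $a>2k$ the function $F$ is therefore simultaneously smooth on $\widetilde{TM}$, positively homogeneous, and strongly convex, which is exactly the assertion. The main subtlety throughout is to invoke correctly the equivalence between the analytic condition on $g_{ij}$ and the geometric convexity of the indicatrix, and to check that the convexity threshold $a>2k$ indeed dominates the positivity threshold $a>k$ needed for $F$ to be well defined.
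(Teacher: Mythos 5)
Your proposal is correct and follows essentially the same route as the paper: identifying $F$ via the implicit lima\c{c}on equation (Okubo's method) and reducing strong convexity to positivity of the lima\c{c}on curvature $k_p(t)$, minimized at $\cos t=-1$ to obtain $(a-k)(a-2k)>0$, i.e. $a>2k$. Your explicit exhibition of $\alpha=\tfrac1a\sqrt{x^2+y^2}$, $\beta=-\tfrac{k}{a^2}x$ and the check that $a>2k$ dominates the positivity threshold $a>k$ are welcome details the paper leaves implicit, but they do not change the argument.
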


  % ------------------------------

  \begin{remark}
    We observe that a similar result can be obtained when $(\mathcal C)$ is the unit circle and $P(a,0)$ a point on the $x$-axis. By a similar computation as in the case above, we obtain the pedal curve parametric equations
    \begin{equation}\label{2.9.1}
\begin{cases}
p_1(t)= (1-a\cos t)\cdot \cos t + a \\%\cos t+a-a\cos^2 t\\
p_2(t)=(1-a\cos t)\cdot \sin t.
\end{cases}
\end{equation}

Observe that this curve can be regarded as a lima\c{c}on with parameters $(-a,1)$ with center translated from origin to $(a,1)$. The convexity condition reads $-\frac{1}{2}<a<\frac{1}{2}$.

We obtain that the Finsler metric on a two dimensional manifold $M$ whose indicatrix is given by the pedal curve of a unit circle
  $x^2+y^2=1$ with pedal point $(a,0)$ is of type \begin{equation*}
F=\frac{\alpha^2}{\beta_1-\sqrt{(\alpha-\beta)^2+\beta_2^2}},
\end{equation*}
where $\alpha$ is a Riemannian metric and $\beta_1,\beta_2$ are the linear form in $TM$. This Finsler metric is strongly convex for $a\in\left( -\frac{1}{2},\frac{1}{2}\right)$.
    
    \end{remark}

%%%%%%%%%%%%%%%%%%%%%%%%%
\subsection{The pedal curve of an ellipse}\label{sec:3.1}%Affine equivalent curves}
\subsubsection{The pedal curve}
\begin{remark}[Motivation]
Let us recall that two polynomials $P,Q$ in $x,y$ with real coefficients are equivalent if there exists a non zero $\lambda\in\mathbb{R}$ such that $P=\lambda\cdot Q$. This is an equivalence relation on the set of polynomials and an equivalence class is called an {\it affine plane curve}. Moreover, two affine curves
%\begin{equation*}
%\begin{split}
$(c_1):\ f(x,y)=0$,  
$(c_2):\ g(x,y)=0$ 
%\end{split}
%\end{equation*}
are called {\it affinely equivalent} if there exists an affine map $\phi$ on $\mathbb{R}^2$ and a scalar $\lambda\neq 0$ such that $g(x,y)=\lambda\cdot f(\phi(x,y))$. Since the set of affine maps on $\mathbb{R}^2$ is a group $(Aff(2),0)$, with the operation of composition, affine equivalence defines an equivalence relation for plane curves in $\mathbb{R}^2$. Observe that the degree $d$ of a curves is an affine invariant. Clearly $d=1$ gives straight lines, so they are not interesting for us.

The next simple case is $d=2$, i.e. conic, the circle being affinely equivalent to real ellipse, which is the only closed and convex conic. 
\end{remark}

It is therefore naturally to consider the general case when the curve $(\mathcal C)$ is an ellipse,
i.e.
\begin{equation*}(\mathcal C):\ 
%\begin{cases}
x=k+a\cos t,\quad  
y=b\sin t
%\end{cases}
,\quad \ k>0,\ b>0,\ a>0,\ a\neq b,
\end{equation*}
and $P(x_0,y_0)$ an
arbitrary point.

%There is the only closed and convex conic.
%The Frenet frame is
%\begin{equation*}
%T(t)=\frac{1}{|c'|}\left( \begin{array}{c} -a\sin t\\ b\cos t
%\end{array} \right),\quad N(t)=\frac{1}{|c'|}\left( \begin{array}{c} -b\cos t \\ -a\sin t
%\end{array} \right),
%\end{equation*}
%\begin{equation*}
%|c'|^2=b^2\cdot \cos^2 t+a^2\cdot \sin^2 t,\quad k_c=\frac{ab}{\left(b^2\cdot \cos^2 t+a^2%\cdot \sin^2 t \right)^{3/2}}.
%\end{equation*}

The pedal curve of $(\mathcal C)$ with respect to the pedal point $P(x_0,y_0)$ is
\begin{equation*}
p(t)=\frac{1}{|c'|^2}\left\{b(k\cos t+a)\left( \begin{array}{c} b\cos t\\ a\sin t
\end{array}  \right)+(-x_0\cdot a\sin t+y_0 b\cdot\cos t)\left( \begin{array}{c} -a\sin t\\ b\cos t
\end{array}\right) \right\}.
\end{equation*}

For the sake of simplicity, we consider the case when $P\equiv O$ is the origin. In this case, the pedal curve has the parametric equations
\begin{equation}\label{curve3}
\begin{cases}
p_1(t)=\frac{1}{|c'|^2}b^2(k\cos t+a) \cos t\\
p_2(t)=\frac{1}{|c'|^2}ab(k\cos t+a) \sin t,
\end{cases}
\end{equation}
and from here it follows the implicit equation
$a^2x^2+b^2y^2=(x^2+y^2-kx)^2.$

%Indeed, observe that if we put $p_1(t)=x(t)$ and $p_2(t)=y(t)$, then by multiplying equati%ons in  \eqref{curve3} by $a$ and $b$ and summation, we get
%\begin{equation}\label{ellipse eq1}
%a^2x^2+b^2y^2=\frac{a^2b^4}{|c'|^2}(k\cos t+a)^2.
%  \end{equation}
%  On the other hand, again from \eqref{curve3}, a straightforward computation gives
%  \begin{equation}\label{ellipse eq2}
%x^2+y^2-kx=\frac{ab^2}{|c'|}(k\cos t+a),
%\end{equation}
%hence \eqref{ellipse eq1} follows.% by comparing \eqref{ellipse eq1} and \eqref{ellipse eq%2}.

Recall that a curve $p=p(t)$ is convex if and only if $k_c[2k_c|c|^2+\langle c,N\rangle]>0$, so we compute this equation and get the condition
\begin{equation*}
				\begin{split}
				-a^3+2ak^2+2ab^2+(a^2-b^2)\cos^2 t(k\cos t+3a)+3ka^2\cos t>0.
					\end{split}
				\end{equation*}

                                Again for simplicity we can consider $a>b$, then we  have to check that
$$3 a^2k \cos t -a^3+2 a k^2+2 a b^2>-3 a^2k  -a^3+2 a k^2+2 a b^2>0,$$
then strongly convexity reads (see Figure \ref{fig2})
\begin{equation}\label{ellipse convex condition}
a>b>\frac{1}{
\sqrt{2}}\sqrt{3ak+a^2-2k^2}.
\end{equation}

%-------------------------------------
\subsubsection{The Finsler metric}

If we apply Okubo's method we obtain the Minkowski norm
\begin{equation}\label{Mink for ellipse}
F=\frac{x^2+y^2}{\sqrt{a^2x^2+b^2y^2}+kx},
\end{equation}
with the strongly convexity condition $a>b>\frac{1}{\sqrt{2}}\sqrt{3ak+a^2-2k^2}$. Observe that this gives the Finsler metric on $M$
\begin{equation}\label{Finsler for ellipse}
F=\frac{\alpha_1^2}{\alpha_2-\beta},
\end{equation}
where $\alpha_1,\alpha_2$ are two different Riemannian metrics. In the case $\alpha_1=\alpha_2$ we obtain the usual slope metric.
	\begin{figure}[ht]
    \centering
    \begin{subfigure}[b]{0.35\textwidth}
    \centering
        \includegraphics[width=\textwidth]{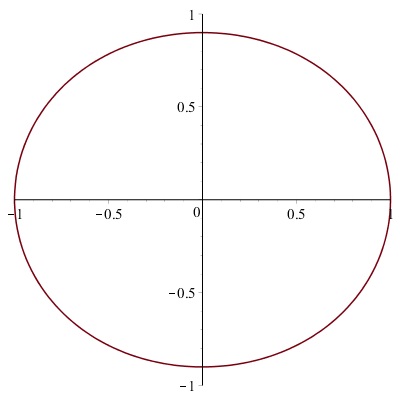}
      %  \caption{The curve in \eqref{curve3} for $a=2,\ b=9,\ k=10$.\\ \quad}
    \end{subfigure}\quad\quad\quad\quad
          \begin{subfigure}[b]{0.35\textwidth}
          \centering
        \includegraphics[width=\textwidth]{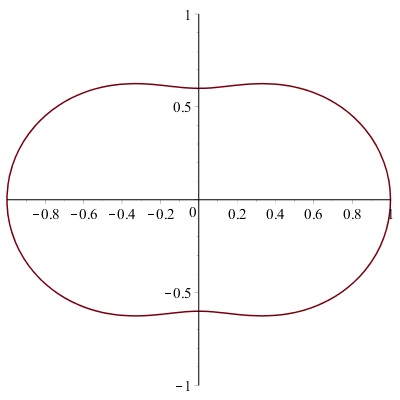}
    %    \caption{The curve in \eqref{curve3} for $a=2,\ b=6,\ k=10$. This curve not convex.}
        \end{subfigure}
    \caption{The convex curve in \eqref{curve3} for $a=10,\ b=9,\ k=2$ (left), and the non-convex case for $a=10,\ b=6,\ k=2$ (right). }\label{fig2}
\end{figure}
%Observe that this gives the Finsler metric on $M$
%$$
%F=\frac{\alpha_1^2}{\alpha^2-\beta},
%$$
%where $\alpha_1,\alpha_2$ are two different Riemannian metrics. In the curve $\alpha_1=\alpha_2$ we obtain the usual slope metric.

 We summarize
  \begin{theorem}\label{ellipse indicatrix}
  The Finsler metric on a two dimensional manifold $M$ whose indicatrix is given by the pedal curve of the ellipse
  $\left(\frac{x-k}{a}\right)^2+\left(\frac{y}{b}\right)^2=1$ with origin as pedal point is of type
$
F=\frac{\alpha_1^2}{\alpha_2-\beta},
$
where $\alpha_1,\alpha_2$ are two different Riemannian metrics and $\beta$ is a linear form in $TM$. This Finsler metric is strongly convex for $a>b>\frac{1}{
\sqrt{2}}\sqrt{3ak+a^2-2k^2}.$
\end{theorem}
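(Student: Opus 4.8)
The plan is to assemble the computations already carried out in this subsection and to recognize them as the announced $\frac{\alpha_1^2}{\alpha_2-\beta}$ structure. First I would start from the implicit equation of the pedal curve of the ellipse with the origin as pedal point, namely $a^2x^2+b^2y^2=(x^2+y^2-kx)^2$, obtained by eliminating $t$ from the parametric equations \eqref{curve3} (equivalently, by inserting the ellipse parametrization into \eqref{2.4}). The branch enclosing the origin is the one on which $x^2+y^2-kx>0$; taking the positive square root there gives $x^2+y^2-kx=\sqrt{a^2x^2+b^2y^2}$, which rearranges to $\frac{x^2+y^2}{\sqrt{a^2x^2+b^2y^2}+kx}=1$. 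Reading the left-hand side as a positively homogeneous function of degree one in $(x,y)$ — Okubo's method — identifies the Minkowski norm \eqref{Mink for ellipse} whose indicatrix is exactly this pedal curve.

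To exhibit the structure, I would set $\alpha_1:=\sqrt{x^2+y^2}$, $\alpha_2:=\sqrt{a^2x^2+b^2y^2}$, and $\beta:=-kx$, so that $F=\frac{\alpha_1^2}{\alpha_2-\beta}$. Since $a\neq b$, the quadratic forms $\alpha_1^2=x^2+y^2$ and $\alpha_2^2=a^2x^2+b^2y^2$ are genuinely distinct, hence $\alpha_1$ and $\alpha_2$ are two different Euclidean inner products on the fiber, while $\beta$ is a linear form in the fiber variables. Promoting this fiberwise Minkowski norm to a family varying smoothly over $M$ — the construction recalled in the Introduction — turns $\alpha_1,\alpha_2$ into Riemannian metrics and $\beta$ into a $1$-form on $TM$, which yields the announced form \eqref{Finsler for ellipse}. (One should also note that $\alpha_2-\beta=\sqrt{a^2x^2+b^2y^2}+kx>0$ on the relevant fiber region, so that $F$ is well defined and positive there.)

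For the strong convexity statement I would invoke the simplified criterion \eqref{simplif form}, valid precisely because the pedal point is the origin, namely $k_c\!\left[2k_c|c|^2+\langle c,N\rangle\right]>0$. Substituting $c(t)=(k+a\cos t,\,b\sin t)$ and its curvature $k_c$ produces, after simplification, the trigonometric polynomial inequality $-a^3+2ak^2+2ab^2+(a^2-b^2)\cos^2 t\,(k\cos t+3a)+3ka^2\cos t>0$ displayed above. The main obstacle is to secure this sign \emph{uniformly} in $t\in[0,2\pi)$ rather than at a single value of the parameter, since the polynomial genuinely depends on $\cos t$ through terms of degree up to three.

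I would resolve this by a two-step worst-case estimate under the hypothesis $a>b$. Because $a^2-b^2>0$, $\cos^2 t\ge 0$, and $k\cos t+3a\ge 3a-k>0$ (the mild proviso $3a>k$ being compatible with the intended range), the contribution $(a^2-b^2)\cos^2 t\,(k\cos t+3a)$ is nonnegative and may be discarded for a lower bound; what remains is $3ka^2\cos t-a^3+2ak^2+2ab^2$, minimized at $\cos t=-1$. Demanding that minimum to be positive, i.e. $-3ka^2-a^3+2ak^2+2ab^2>0$, and dividing by $a>0$, gives $2b^2>3ak+a^2-2k^2$, which is exactly the range \eqref{ellipse convex condition}, namely $a>b>\frac{1}{\sqrt2}\sqrt{3ak+a^2-2k^2}$. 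This establishes both claims of the theorem.
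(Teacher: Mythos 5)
Your proposal is correct and follows essentially the same route as the paper: Okubo's method applied to the implicit equation $a^2x^2+b^2y^2=(x^2+y^2-kx)^2$ to read off $F=\frac{\alpha_1^2}{\alpha_2-\beta}$, then the origin-pedal-point convexity criterion $k_c\left[2k_c|c|^2+\langle c,N\rangle\right]>0$ reduced, for $a>b$, by discarding the nonnegative term $(a^2-b^2)\cos^2 t\,(k\cos t+3a)$ and minimizing at $\cos t=-1$. In fact you are slightly more careful than the paper, which silently assumes $k\cos t+3a\geq 0$ and the positivity of the denominator $\sqrt{a^2x^2+b^2y^2}+kx$, both of which you justify.
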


\begin{remark}
Observe that if put $a=b$ in theorem \eqref{ellipse indicatrix} we obtained the Finsler metric
%\begin{equation*}
$F=\frac{x^2+y^2}{a\sqrt{x^2+y^2}+kx},$
%\end{equation*}
with strongly convexity condition $a>\dfrac{1}{\sqrt{2}}\sqrt{3ak+a^2-2k^2}$, or, equivalently %%%%%. We  can write
\begin{equation}
%\begin{split}
2a^2>3ak+a^2-2k^2\ \Rightarrow\
(a-k)(a-2k)>0.
%\end{split}
\end{equation}
Therefore we obtain $a>2k$, that is same result as in Proposition \ref{slope metric convex}.
\end{remark}

%%%%%%%%%%%%%%%%%%%%%%%%%%%%%%%%%%%%%%%%%%%%%%%%%%%%%%%%%%%%%%%%%%%%%%%
\section{The pedal surface}
We are going to extend our considerations from curves of surfaces. Instead of the curve $(\mathcal C)$, we are going to consider a smooth surface $\mathcal S\hookrightarrow \mathbb{R}^3$ embedded in $\mathbb{R}^3$ with parametric equations
\begin{equation}\label{3.1}
(\mathcal S):%\begin{cases}
x=x(u,v),\quad 
y=y(u,v), \quad 
z=z(u,v),
%\end{cases}
\end{equation}
and observe that, at any regular vector $(u,v)$ of the parameters, the tangent plane to $(\mathcal S)$ at $(x(u,v),y(u,v),z(u,v))\in \mathcal S$ is given by 
\begin{equation*}
\begin{split}
(\pi):\quad& \frac{\partial (y,z)}{\partial (u,v)}(x-x(u,v))+\frac{\partial (z,x)}{\partial (u,v)}(y-y(u,v))+\frac{\partial (x,y)}{\partial (u,v)}(z-z(u,v))=0,
\end{split}
\end{equation*}
where
\begin{equation*}\quad \frac{\partial (y,z)}{\partial (u,v)}=
				\left| \begin{array}{cc}
					\frac{\partial y}{\partial u} & \frac{\partial y}{\partial v}\\
					\frac{\partial z}{\partial u}& \frac{\partial z}{\partial v}
				\end{array} \right|=\left| \begin{array}{cc}
					y_u & y_v\\
					z_u& z_v
				\end{array} \right|
				\end{equation*}
and so on. The normal to $(\pi)$ at a point $(u,v)$ is given by
\begin{equation*}
\begin{split}
(\pi^{\perp}):\quad \frac{x}{\frac{\partial (y,z)}{\partial (u,v)}}=\frac{y}{\frac{\partial (z,x)}{\partial (u,v)}}=\frac{z}{\frac{\partial (x,y)}{\partial (u,v)}}.
\end{split}
\end{equation*}
Let $(\mathcal S)$ be a regular surface parameterized on in \eqref{3.1} and let $P(x_0,y_0,z_0)$ be a fixed point, the {\it pedal point}. Then the {\it pedal surface} of the surface $(\mathcal S)$ with respect to the point $P$ is the parameterized surface obtained by associating to the parameter $(u,v)$ the orthogonal projection $p(u,v)$ of $P$ onto the tangent plane $(\pi)$ at $\mathcal S (u,v)$.% see Figure \ref{Fig4}.

The tangent plane $(\pi)$ is generated by the vectors
%\begin{equation*}
$  \mathcal S_u=\left(
  %  \begin{array}{c}
x_u,y_u,z_u
  %  \end{array}
  \right)^t$, and 
$ \mathcal S_v=
\left(
%  \begin{array}{c}
x_v, y_v, z_v
%  \end{array}
\right)^t$, 
%\end{equation*}
while the unit normal vector to $(\mathcal S)$ is
$$N=\frac{\mathcal S_u\times\mathcal S_v}{||\mathcal S_u\times\mathcal S_v||}=\frac{1}{||\mathcal S_u\times\mathcal S_v||}\left( \frac{\partial (y,z)}{\partial (u,v)},\frac{\partial (z,x)}{\partial (u,v)},\frac{\partial (x,y)}{\partial (u,v)} \right)^t.$$
Similarly with the plane curve's case, a straightforward computation shows
%\begin{theorem}\label{theorem 3.1}
%Let $\mathcal S$ be a smooth surface embedded in $\mathbb{R}^3$ and $P(x_0,y_0,z_0)$ an arbitrary point in $\mathbb{R}^3,\ P\notin \mathcal S$.
that the pedal surface of the surface $(\mathcal S)$ with respect to the point $P$ is given by 
\begin{equation}\label{pedal general formula}
  p(u,v)=\langle\mathcal S-r_0,N\rangle\cdot N+r_0.
  \end{equation}

%\end{theorem}

 The convexity condition of the pedal surface is given by the condition $K>0$, there $K$ is the Gauss curvature, that is,
%$$
%K:=\frac{\left|\begin{array}{cc} \langle p_{uu},V\rangle & \langle p_{uv},V\rangle\\\langl%e p_{uv},V\rangle& \langle p_{vv},V\rangle%
%\end{array} \right|}{\left|\begin{array}{cc} \langle p_u,p_u\rangle & \langle p_u,p_v\rang%le\\ \langle p_u,p_v\rangle &\langle p_v,p_v\rangle
%\end{array} \right|}>0,
%$$
%or equivalently, since $EG-F^2>0$, 
\begin{equation}\label{convex pedal surface}
\left|\begin{array}{cc} \langle p_{uu},p_u\times p_v\rangle & \langle p_{uv},p_u\times p_v\rangle\\\langle p_{uv},p_u\times p_v\rangle& \langle p_{vv},p_u\times p_v\rangle
\end{array} \right|>0.
\end{equation}
The formula can be quite complicate in the general case, but we will consider some examples.

\begin{remark}
  In the same way we can define the pedal hypersurface of an $n$-dimensional surface $\mathcal S \hookrightarrow \R^{n+1}$. The formula \eqref{pedal general formula}
%  Theorem \ref{theorem 3.1}
  is clearly true for any dimensions, but the sectional curvature computations and the determination of the strongly convexity condition becomes more difficult. Nevertheless, in the case of the $n$-sphere the computations are quite straightforward as we shall see. 
  \end{remark}
%%%%%%%%%%%%%%%%%%%%%%%%%%%%%%%%%%%%%%%%%%%%%

\section{Some remarkable  pedal surfaces and the 
  corresponding Finsler metrics}
\subsection{The pedal surface of a 2-sphere}
\subsubsection{The pedal surface}
The easiest case is in the case where $(\mathcal S)$ in the $2$-sphere $\mathbb{S}^2\hookrightarrow \mathbb{R}^3$ with center $(k,0,0)$ and radius $r$, i.e.
\begin{equation*}(\mathcal S):\
%\begin{cases}
x=k+r\sin v\cos u,\quad 
y=r\sin v\sin u,\quad 
z=r\cos v
% \end{cases}
,\quad \ k>0,\ r>0.
\end{equation*}
Then the exterior oriented unit normal vector is
$$N=\frac{\mathcal S_v\times \mathcal S_u}{||\mathcal S_v\times \mathcal S_u||}=
\left( \cos u\sin v, \sin u\sin v, \cos v\right)^t,
% \left(\frac{r^2\sin^2 v\cos u}{\sqrt{r^4\sin^2 v}},\frac{r^2\sin^2 v\sin u}{\sqrt{r^4\sin^%2 v}},\frac{r^2\sin^2 v\cos v}{\sqrt{r^4\sin^2 v}}\right)^t,
$$
and hence the pedal surface of the $2$-sphere $(\mathcal S)$ center at $(k,0,0)$ with respect to the pedal point $P\equiv O$ (origin of $\mathbb{R}^3$) is
\begin{equation*}
p(u,v):
\begin{cases}
x(u,v)=\sin v(r+k\cos u\sin v)\cos u\\
y(u,v)=\sin v(r+k\cos u\sin v)\sin u\\
z(u,v)=\cos v(r+k\cos u\sin v).
\end{cases}
\end{equation*}
%--------------

The implicit equation of $p(u,v)$ can be written in the form $f(x,y,z)=0$ where
\begin{equation*}
f(x,y,z)=x^2+y^2+z^2-r\sqrt{x^2+y^2+z^2}-kx.
\end{equation*}

This surface can be called the {\it lima\c{c}on surface}, or the two dimensional lima\c{c}on.

We recall that a surface is called strongly convex  when $LN-M^2>0$, where $L=\langle p_{uu},V \rangle,\ N=\langle p_{vv},V \rangle$ and $M=\langle p_{uv},V \rangle$. Then, the unit normal vector  is given by $V:={\nabla f} \slash {||\nabla f||}$, hence the strongly convexity condition reads $\langle p_{uu},\nabla f \rangle\langle p_{vv},\nabla f \rangle-\langle p_{uv},\nabla f \rangle^2>0$, and a straightforward computation gives

\begin{equation*}\nabla f:\left( \dfrac{\partial f}{\partial x},\dfrac{\partial f}{\partial y},\dfrac{\partial f}{\partial z}\right)^{t}=
\left(\begin{array}{l} (2Ak+r)A-k \\(2Ak+r)\sin u\sin v \\(2Ak+r)\cos v
\end{array} \right),
\end{equation*}
where $A:=\cos u\sin v$. 

Moreover, we have
%\begin{equation*}
%\begin{split}
%p_{uu}(u,v)&:
%\begin{cases}
%x(u,v)=-(4kA+r)A+2k\sin^2 v\\
%y(u,v)=-\sin v \sin u \{4kA +r\}\\
%z(u,v)=-kA\cos v ,
%\end{cases}\\
%p_{uv}(u,v)&:
%\begin{cases}
%x(u,v)=- \{4kA +r\}\cos v \sin u\\
%y(u,v)=-2k\cos v\sin v+\{4kA+r\}\cos u\cos v\\
%z(u,v)=-k\sin u \{2\cos^2 v -1\},
%\end{cases}\\
%p_{vv}(u,v)&:
%\begin{cases}
%x(u,v)=-\{ 4kA+r\}A+2k\cos^2 u\\
%y(u,v)=-\{ 4kA+r\}\sin u\sin v+2k\cos u\sin u\\
%z(u,v)=-\{ 4kA+r\}\cos v.
%\end{cases}
%\end{split}
%\end{equation*}
% where we denote $A=\cos u\sin v$.
%
%It follows
\begin{equation*}\begin{split}
\langle p_{uu},\nabla f \rangle=&\left[(-4kA^2-rA+2k\sin^2 v)(2A^2k+rA-k)-(4kA +r)(2kA +r)\sin^2 v \sin^2 u\right.\\
&\left.-kA(2Ak+r)\cos^2 v\right]\\
\langle p_{vv},\nabla f \rangle=&\left[ (-4kA^2-rA+2k\cos^2 u)(2A^2k+rA-k)+2kA(2Ak+r)\sin^2 u\right.\\
&\left.-(4kA +r)(2kA +r)(1-A^2)\right]\\
\langle p_{uv},\nabla f \rangle=&2Ak^2\cos v\sin u.
\end{split}
\end{equation*}
The strongly convexity condition reads now
\begin{equation*}
\langle p_{uu},\nabla f \rangle\langle p_{vv},\nabla f \rangle-\langle p_{uv},\nabla f \rangle^2=\sin^2 v(3Akr+2k^2+r^2)(Ak+r)(2Ak+r)>0.
\end{equation*}
%------------

Taking into account that $-1\leq A\leq 1$, and $r,k>0$ it results
%\begin{equation*}
%  \begin{split}
%   & 3Akr+2k^2+r^2>r^2-3kr+2k^2,\\
%   & Ak+r>r-k,\quad  2Ak+r>r-2k
%\end{split}
%\end{equation*}
%hence, we get 
%$$
% (3Akr+2k^2+r^2)
%    (Ak+r)
%    (2Ak+r)>(r^2-3kr+2k^2)(r-k)(r-2k)=(r-k)^2(r-2k)^2>0
%$$
that the pedal surface is strongly convex for $r>2k$. 
%It is easy to see that by assuming that the pedal point $O$ is inside the sphere, i.e. $k<r$, it follows $2kr\sin v \cos u +k^2+r^2>0$ and $k\cos u \sin v +r>0$, hence strongly convexity reads
%$$(2k\cos u \sin v +r)(3kr\sin v \cos u +2k^2+r^2)>0.$$
%Let $\cos u \sin v =A$ then
%$$(2Ak+r)(3Akr+2k^2+r^2)>0.$$
%Assume that $2Ak+r>0$, i.e.
%$$(r-k)(2Ak+r)=2Akr+r^2-2Ak^2-kr>0.$$
%That is $3Akr+2k^2+r^2>0$. Since $2Ak+r$ is positive when $r>2k$.
This condition is consistent with the  condition obtained in the case of the pedal curve of the circle (see Figure \ref{fig3}).
%%%%%%%%%%%%%%%%%%%%%%%%%%%%%
\subsubsection{The Finsler metric}

The Minkowski metric associated  can be easily obtained by Okubo's method
$$F=\frac{x^2+y^2+z^2}{r\sqrt{x^2+y^2+z^2}+kx},$$
that is a slope metric  $F=\frac{\alpha^2}{\alpha-\beta}$ on the surface $M$.

\begin{figure}[ht]
    \centering
    \begin{subfigure}[b]{0.4\textwidth}
          \centering
        \includegraphics[width=\textwidth]{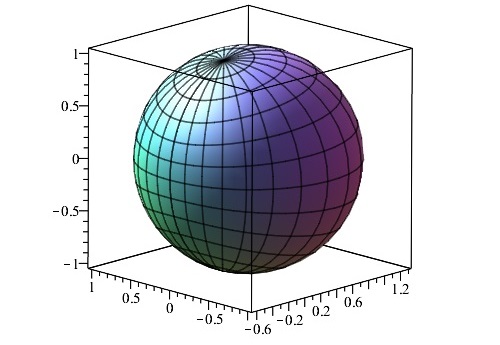}
   %     \caption{The pedal surface of sphere for $k=1,\ r=\frac{1}{3}$.\\}
        \end{subfigure}\quad\quad\quad
    \begin{subfigure}[b]{0.3\textwidth}
    \centering
        \includegraphics[width=\textwidth]{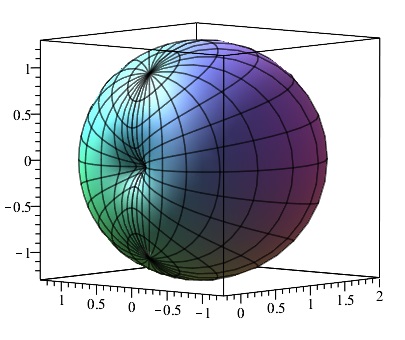}
    %    \caption{The pedal surface of sphere for $k=1,\ r=1$. This surface is not convex.}
    \end{subfigure}\quad
    \caption{The convex pedal surface of the sphere, with pedal point in origin, for $k=\frac{1}{3},\ r=1$ (left), and the non-convex case for $k=1,\ r=1$ (right).}\label{fig3}
\end{figure}

 By smoothly moving this Minkowski norm on a $3$- dimensional smooth manifold $M$ we get the usual slope metric on $M$
%\begin{equation*}
$F=\frac{\alpha^2}{\alpha-\beta},$
%\end{equation*}
where $\alpha$ is a Riemanninan metric $M$ and $\beta$ a linear $1$-form.

In conclusion we get:
\begin{theorem}
  The Finsler metric on a three dimensional manifold $M$ whose indicatrix is given by the pedal surface of a sphere
  $(x-k)^2+y^2+z^2=r^2$ with origin as pedal point is a slope type metric $F=\frac{\alpha^2}{\alpha-\beta}$, where $\alpha$ is a Riemannian metric on $M$ and $\beta$ a linear one form on $TM$. This Finsler metric is strongly convex for $r>2k$.
\end{theorem}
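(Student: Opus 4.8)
The plan is to read the statement as the conclusion of the explicit computations already performed for the pedal surface of the sphere, and to assemble them into its two assertions: that $F$ is of slope type, and that the sharp convexity range is $r>2k$. First I would record the indicatrix: by \eqref{pedal general formula} applied to the sphere of center $(k,0,0)$ and radius $r$ with the origin as pedal point, the pedal surface is cut out by $f(x,y,z)=x^2+y^2+z^2-r\sqrt{x^2+y^2+z^2}-kx=0$, exactly as computed above. To recover the Finsler norm whose indicatrix is this surface I apply Okubo's method: since $F$ is positively $1$-homogeneous and equals $1$ on the indicatrix, the point $(x,y,z)/F$ lies on the indicatrix, so substituting it into $f=0$ and clearing denominators yields $x^2+y^2+z^2=F\bigl(r\sqrt{x^2+y^2+z^2}+kx\bigr)$, whence $F=\dfrac{x^2+y^2+z^2}{r\sqrt{x^2+y^2+z^2}+kx}$.

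Next I would put $F$ into slope form by choosing the fibre data correctly. Taking the Riemannian norm $\alpha:=\tfrac1r\sqrt{x^2+y^2+z^2}$ (the Euclidean norm rescaled, i.e.\ $a_{ij}=\delta_{ij}/r^2$) and the linear form $\beta:=-\tfrac{k}{r^2}x$, one checks directly that $\alpha-\beta=\tfrac{1}{r^2}\bigl(r\sqrt{x^2+y^2+z^2}+kx\bigr)$ and $\alpha^2=\tfrac{1}{r^2}(x^2+y^2+z^2)$, so that $\alpha^2/(\alpha-\beta)=F$ \emph{identically}; the key point is that the correct rescaling absorbs the factor $r$ in the denominator. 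Moving this Minkowski norm smoothly over $M$, as in the two-dimensional case, then gives the global slope metric $F=\alpha^2/(\alpha-\beta)$ with $\alpha$ a Riemannian metric on $M$ and $\beta$ a $1$-form on $TM$, which settles the first assertion.

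For strong convexity I would use that strong convexity of $F$ (positive-definiteness of the fundamental tensor $g_{ij}$) is equivalent to the convexity of the indicatrix surface $\Sigma_x$, governed by the criterion $K>0$ in \eqref{convex pedal surface}. From the determinant already obtained,
\begin{equation*}
\langle p_{uu},\nabla f\rangle\langle p_{vv},\nabla f\rangle-\langle p_{uv},\nabla f\rangle^2=\sin^2 v\,(3Akr+2k^2+r^2)(Ak+r)(2Ak+r),\qquad A=\cos u\sin v\in[-1,1],
\end{equation*}
and since $|\nabla f|^2>0$ and $EG-F^2>0$, the sign of this quantity is the sign of $K$. Away from the poles $\sin^2 v>0$, so it remains to verify that each of the three factors, all affine and increasing in $A$, stays positive on $[-1,1]$; their minima occur at $A=-1$ and equal $(r-k)(r-2k)$, $r-k$ and $r-2k$. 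Under $r>2k$ all three are positive, the binding constraint being $r-2k>0$ from the factor $2Ak+r$, and at the coordinate singularities $v=0,\pi$ positivity of $K$ extends by continuity. Since the pedal surface is a closed embedded surface with $K>0$ everywhere, Hadamard's ovaloid theorem guarantees it is globally strongly convex, completing the argument.

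The part I expect to be the genuine obstacle is not any individual computation — these are in hand — but making rigorous the passage from positive-definiteness of $g_{ij}$ to positivity of the Gauss curvature of the indicatrix (the equivalence underlying \eqref{convex pedal surface}), together with the case analysis that singles out $r>2k$ as the \emph{sharp} threshold among the three factors and the careful handling of the polar coordinate degeneracy where the parametrization, though not the surface, breaks down.
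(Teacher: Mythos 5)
Your proposal is correct and takes essentially the same route as the paper: the same implicit equation $f(x,y,z)=x^2+y^2+z^2-r\sqrt{x^2+y^2+z^2}-kx=0$ for the pedal surface, Okubo's method to extract $F$, and the same factored determinant $\sin^2 v\,(3Akr+2k^2+r^2)(Ak+r)(2Ak+r)$ analyzed over $A\in[-1,1]$ to conclude strong convexity for $r>2k$. Your additions --- the explicit rescaled data $\alpha=\tfrac1r\sqrt{x^2+y^2+z^2}$, $\beta=-\tfrac{k}{r^2}x$ realizing the slope form, the attention to the polar coordinate degeneracy, and the appeal to Hadamard's theorem for global convexity --- are refinements of steps the paper simply asserts without comment.
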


\begin{remark}
  Without giving here the concrete computations, a quick look at the formulas above show that the same is true for the arbitrary dimensional case as well. We only formulate here without proof the following
  \begin{theorem}
  The Finsler metric on an $n\geq 2$-dimensional manifold $M$ whose indicatrix is given by the pedal hypersurface of an $n-1$-sphere
  $(x_1-k)^2+x_2^2+\dots+x_n^2=r^2$ with origin as pedal point is a slope type metric $F=\frac{\alpha^2}{\alpha-\beta}$, where $\alpha$ is a Riemannian metric on $M$ and $\beta$ a linear one form on $TM$. This Finsler metric is strongly convex for $r>2k$.
\end{theorem}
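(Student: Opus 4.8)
The plan is to pin down the pointwise Minkowski norm, recognise it as the $n$-dimensional Matsumoto metric, and then read off convexity from the classical $(\alpha,\beta)$-criterion, which carries no reference to the dimension. First I would repeat the pedal computation of the previous section in $\mathbb R^n$. Parametrising the sphere by its exterior unit normal $N$, a point of $(\mathcal S)$ is $\mathcal S=(k,0,\dots,0)+rN$, so formula \eqref{pedal general formula} with $r_0=O$ gives
\begin{equation*}
p=\langle\mathcal S,N\rangle\,N=(r+kN_1)\,N ,
\end{equation*}
$N_1$ being the first coordinate of $N$. As $|p|=r+kN_1>0$ (for $r>k$) and $N=p/|p|$, eliminating $N$ yields $|p|^2-kp_1=r|p|$, i.e. the implicit equation $\big(x_1^2+\dots+x_n^2-kx_1\big)^2=r^2(x_1^2+\dots+x_n^2)$. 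By Okubo's method the associated $1$-homogeneous norm is
\begin{equation*}
F(x)=\frac{x_1^2+\dots+x_n^2}{r\sqrt{x_1^2+\dots+x_n^2}+kx_1}
=\frac1r\,\frac{\alpha^2}{\alpha-\beta},
\end{equation*}
with $\alpha=\sqrt{x_1^2+\dots+x_n^2}$ the Euclidean (Riemannian) norm and $\beta=-\tfrac{k}{r}x_1$ a linear form of length $b:=\|\beta\|_\alpha=k/r$. Thus $F$ is, up to the positive constant $1/r$, exactly the $n$-dimensional Matsumoto (slope) metric, which settles the structural claim and, incidentally, confirms the assertion that higher-dimensional Matsumoto metrics are pedal hypersurfaces of spheres.

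For the convexity I would invoke the standard criterion for $(\alpha,\beta)$-metrics $F=\alpha\phi(s)$, $s=\beta/\alpha$: the fundamental tensor $g_{ij}=\tfrac12\partial^2_{y^iy^j}F^2$ is positive definite iff $\phi>0$, $\phi-s\phi'>0$ and $\phi-s\phi'+(b^2-s^2)\phi''>0$ on $|s|\le b$ (Chern--Shen). Crucially these inequalities do not see the dimension; the only way $n$ could enter is through $b=\|\beta\|_\alpha$, and here $b=k/r$ is fixed by the geometry for every $n$. Substituting the Matsumoto profile $\phi(s)=(1-s)^{-1}$ and clearing positive denominators,
\begin{align*}
\phi-s\phi' &= \frac{1-2s}{(1-s)^2}>0 \iff s<\tfrac12,\\
\phi-s\phi'+(b^2-s^2)\phi'' &= \frac{1-3s+2b^2}{(1-s)^3}>0 \iff 1-3s+2b^2>0 .
\end{align*}
Since $s\in[-b,b]$ with $b=k/r$ and both numerators are decreasing in $s$, each is governed by its value at $s=b=k/r$, namely $1-2k/r$ and $r^{-2}(r-k)(r-2k)$; each is positive precisely when $r>2k$. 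Hence $F$ is strongly convex if and only if $r>2k$, uniformly in $n$, and the factor $(r-k)(r-2k)$ reproduces exactly the threshold found for the lima\c{c}on ($n=2$) and the lima\c{c}on surface ($n=3$).

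The main obstacle is not computational but lies in justifying that the convexity test is genuinely dimension-free. One must either quote the $(\alpha,\beta)$-criterion above, or reprove what is needed by diagonalising $g_{ij}$ in the frame adapted to the splitting $\mathbb R^n=\mathbb R e_1\oplus e_1^{\perp}$, where the rotational symmetry about the $x_1$-axis forces the eigenvalues to collapse onto the three scalars $\phi$, $\phi-s\phi'$ (with multiplicity $n-2$) and $\phi-s\phi'+(b^2-s^2)\phi''$. The only delicate point is the bookkeeping across dimensions: the eigenvalue $\phi-s\phi'$ disappears from $\det g_{ij}$ when $n=2$ (its multiplicity $n-2$ being zero), yet it still yields the same bound $r>2k$, so no dimension produces a different answer. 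Finally, moving this Minkowski norm smoothly over a manifold $M$ preserves pointwise strong convexity, so the Finsler metric $F=\alpha^2/(\alpha-\beta)$ is strongly convex exactly for $r>2k$. The purely computational alternative --- parametrising $p(u_1,\dots,u_{n-1})$ and checking positivity of the Gauss--Kronecker determinant \eqref{convex pedal surface} --- is available, but grows unwieldy with $n$, which is precisely why the $(\alpha,\beta)$-reduction is the natural route.
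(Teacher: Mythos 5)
Your proof is correct, and it takes a genuinely different route from the paper --- which in fact states this theorem inside a remark explicitly ``without proof'', justifying it only by analogy with the computations already done for $n=2$ (the lima\c{c}on) and $n=3$ (the lima\c{c}on surface). In those low dimensions the paper works directly with the indicatrix as a hypersurface: it parametrizes the pedal of the circle/sphere and checks positivity of the curvature, i.e. $k_p>0$ in the plane and $LN-M^2>0$ for the surface, arriving at the factorization $\sin^2 v\,(3Akr+2k^2+r^2)(Ak+r)(2Ak+r)>0$, $A\in[-1,1]$, whence $r>2k$. You instead parametrize the sphere by its exterior unit normal, get the pedal $p=(r+kN_1)N$ and the implicit equation $(|x|^2-kx_1)^2=r^2|x|^2$ in one line \emph{in every dimension}, and then test strong convexity of the fundamental tensor $g_{ij}$ via Shen's criterion for $(\alpha,\beta)$-metrics rather than via the second fundamental form of the indicatrix (the two notions agree for a closed hypersurface enclosing the origin, a point both you and the paper leave implicit). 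Your factors $(1-2s)$ (with multiplicity $n-2$) and $(1-3s+2b^2)$ are exactly the dimension-free shadows of the paper's factors $(2Ak+r)$ and $(3Akr+2k^2+r^2)$ under the substitution $s=-Ak/r$, $b=k/r$, which is a nice consistency check. What your approach buys is precisely what the paper omits: an actual proof, uniform in $n$; what the paper's approach buys is elementary self-containedness, with no appeal to the $(\alpha,\beta)$-metric machinery.

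Two minor points. First, your $F=\frac{1}{r}\cdot\frac{\alpha^2}{\alpha-\beta}$ can be written exactly in the form $\frac{\alpha^2}{\alpha-\beta}$ claimed by the theorem by rescaling $\alpha\mapsto |x|/r$ and taking $\beta=-(k/r^2)x_1$, so the constant can be removed rather than tolerated ``up to a positive factor''. Second, the criterion you quote, with $\phi-s\phi'>0$ listed as a separate requirement, is the $n\ge 3$ form (that factor enters $\det(g_{ij})$ with exponent $n-2$); you already note this bookkeeping issue yourself, and since both factors are governed by the same threshold $b<\tfrac12$, the conclusion $r>2k$ is unaffected in all dimensions.
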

  \end{remark}

%%%%%%%%%%%%%%%%%%%%%%%%%%%%%%5
  \subsection{The Pedal surface of an ellipsoid}
  As a generalization of the Section \ref{sec:3.1}, we can consider the Finsler metric on a surface $M$ whose indicatrix is the pedal surface of an ellipsoid. The computations are quite long, so we give only some ideas of the construction in this section.
  
        \subsubsection{The pedal surface}
The parametric equations of an ellipsoid can be written as 
\begin{equation*}(\mathcal S):
%\begin{cases}
x=k+a\sin v\cos u,\quad 
y=b\sin v\sin u,\quad 
z=c\cos v
% \end{cases}
,\quad \ k>0,\ a>0,\ b>0,\ c>0.
\end{equation*}

The exterior oriented unit normal reads
\begin{equation*}
N=\frac{\mathcal S_v\times \mathcal S_u}{||\mathcal S_v\times \mathcal S_u||}=\frac{1}{||\mathcal S_v\times \mathcal S_u||}
\left(\begin{array}{c} bc\sin v\cos u\\ ac\sin v\sin u\\ ab\cos v
\end{array} \right),
\end{equation*}
where $||\mathcal S_v\times \mathcal S_u||=\sqrt{c^2b^2\sin^2 v\cos^2 u+c^2a^2\sin^2 v\sin^2 u+a^2b^2\cos^2 v}$.

By formula \eqref{pedal general formula}, 
%Theorem \ref{theorem 3.1},
the pedal surface of $\mathcal{S}$ with respect the pedal point $P(x_0,y_0,z_0)$ is
\begin{equation}\label{Ellipsoid Pedal general}
\begin{split}
p(u,v)=&\sin v\frac{\left(bc(k-x_0)\sin v\cos u+a(-cy_0\sin v\sin u -bz_0\cos v+bc)\right)}{||\mathcal S_v\times \mathcal S_u||^2}
\left(\begin{array}{c} bc\sin v\cos u\\ ac\sin v\sin u\\ ab\cos v
\end{array} \right)\\
&+(x_0,y_0,z_0)^t.
\end{split}
\end{equation}

%\begin{equation}\label{Ellipsoid Pedal}
%\begin{cases}
%x=\frac{c^2b^2(k\cos u\sin v+a)}{c^2b^2\sin^2 v\cos^2 u+c^2a^2\sin^2 v\sin^2 u+a^2b^2\cos^2 v}\sin v\cos u\\
%y=\frac{c^2ab(k\cos u\sin v+a)}{c^2b^2\sin^2 v\cos^2 u+c^2a^2\sin^2 v\sin^2 u+a^2b^2\cos^2 v}\sin v\sin u\\
%z=\frac{cb^2a(k\cos u\sin v+a)}{c^2b^2\sin^2 v\cos^2 u+c^2a^2\sin^2 v\sin^2 u+a^2b^2\cos^2 v}\cos v,
%\end{cases}
%\end{equation}

This general case
% Finding the explicit strongly convexity condition of the surface \eqref{Ellipsoid Pedal general}
implies some long computations, but 
%for example,
we can again consider the case when $P\equiv O$ is the origin.
% and $k=0$ and $a=b$.
In this case we obtain
\begin{equation}\label{Ellipsoid Pedal}
p(u,v):\quad
\begin{cases}
x(u,v)=\dfrac{b^2c^2(k\cos u\sin v +a)}{||\mathcal S_v\times \mathcal S_u||^2}\sin v\cos u\\
y(u,v)=\dfrac{abc^2(k\cos u\sin v +a)}{||\mathcal S_v\times \mathcal S_u||^2}\sin v\sin u\\
z(u,v)=\dfrac{ab^2c(k\cos u\sin v +a)}{||\mathcal S_v\times \mathcal S_u||^2}\cos v.
\end{cases}
\end{equation}

From \eqref{ellipse convex condition} we can see that
\begin{equation}\label{ellipsoid equation1}
a^2x^2+b^2y^2+c^2z^2=\frac{a^2b^4c^4(k\cos u\sin v +a)^2}{||\mathcal S_v\times \mathcal S_u||^4}.
\end{equation}

On the other hand,
\begin{equation}\label{ellipsoid equation2}
x^2+y^2+z^2-kx=\dfrac{ab^2c^2(k\cos u\sin v +a)}{||\mathcal S_v\times \mathcal S_u||^2},
\end{equation}
and by comparing \eqref{ellipsoid equation1} and \eqref{ellipsoid equation2} we get the implicit equation of the pedal surface:
\begin{equation}\label{implicit pedal ellipsoid}
(x^2+y^2+z^2-kx)^2=a^2x^2+b^2y^2+c^2z^2.
\end{equation}

Finding an explicit form of general conditions for the strongly convexity of the ellipsoid pedal involves some long computations, that we omit. Some numerical simulations show that for instance, in the case $k=1\slash 3$, $a=b=2$, $c=\sqrt{6}$, we indeed obtain a strongly convex surface, see Figure \ref{fig4}.

%%%%%%%%%%%%%%%%%%%%%%%%%%%%%%%%%%%%%%%%%%%%%%%%%%%%%

\begin{figure}[ht]
    \centering
    \begin{subfigure}[b]{0.3\textwidth}
    \centering
        \includegraphics[width=\textwidth]{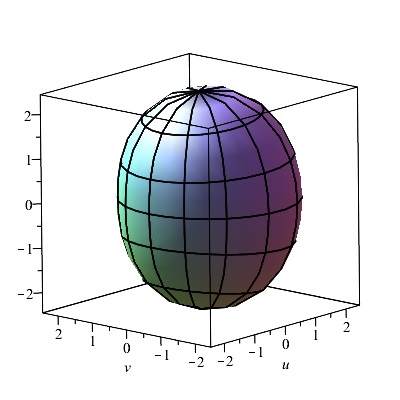}
     %  \caption{The pedal surface of an ellipsoid for  $a=2$, $c=\sqrt{6}$.}
   \end{subfigure}\quad\quad\quad\quad
          \begin{subfigure}[b]{0.3\textwidth}
          \centering
        \includegraphics[width=\textwidth]{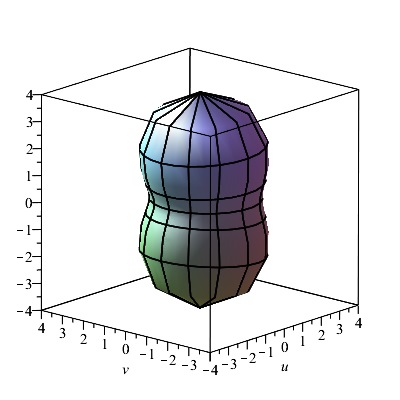}
     %   \caption{The pedal surface of an ellipsoid for $a=2, c=4$. This surface is not convex.}
        \end{subfigure}
    \caption{The convex pedal surface of an ellipsoid, with pedal point at origin, for  $k=1\slash 3$, $a=b=2$, $c=\sqrt{6}$ and the non-convex case for $k=1 \slash 3$, $a=b=2$, $c=4$.}\label{fig4}
\end{figure}
%\begin{figure}[ht]
%	\begin{picture}(0,100)(-50,20)
%	\includegraphics[scale=0.5]{indicatrixEllipsoidConvex}
%	\end{picture}
%	
%	\begin{picture}(0,0)(-240,5)
%	\includegraphics[scale=0.5]{indicatrixEllipsoidNonConvex}	
%	\caption{The pedal surface of sphere when we choose $k=1,\ r=\frac{1}{3}$.}
%	\label{Figure3}
%	\end{picture}
%	\end{figure}

%%%%%%%%%%%%%%%%%%%%%%%%%

\subsubsection{The Finsler metric}

Applying Okubo's method to \eqref{implicit pedal ellipsoid} we obtain the Minkowski norm
$F=\frac{x^2+y^2+z^2}{\sqrt{a^2x^2+b^2y^2+c^2z^2}+kx}$
that is clearly the generalization of \eqref{Mink for ellipse}. 

The  Finsler metric corresponding to this indicatrix surface is of the type \eqref{Finsler for ellipse}, where $\alpha_1,\alpha_2$ are two different Riemannian metrics. This the generalization of the discussion in Section \ref{sec:3.1}.

 We can summarize
  \begin{theorem}\label{ellipsoid indicatrix}
  The Finsler metric on a three dimensional manifold $M$ whose indicatrix is given by the pedal surface of the ellipsoid
  $\left(\frac{x-k}{a}\right)^2+\left(\frac{y}{b}\right)^2+\left(\frac{z}{c}\right)^2=1$ with origin as pedal point is of type
$
F=\frac{\alpha_1^2}{\alpha_2-\beta},
$
where $\alpha_1,\alpha_2$ are two different Riemannian metrics and $\beta$ is a linear form in $TM$. This Finsler metric is strongly convex subject to some conditions for $a$, $b$, $c$ and $k$.
%for $a>b>\frac{1}{
%\sqrt{2}}\sqrt{3ak+a^2-2k^2}.$
\end{theorem}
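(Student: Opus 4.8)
The plan is to separate the statement into its two assertions: first, that the Finsler norm has the announced rational form $F=\frac{\alpha_1^2}{\alpha_2-\beta}$, and second, the strong convexity under suitable restrictions on the parameters. For the first part I would proceed exactly as in the ellipse case of Section \ref{sec:3.1}, applying Okubo's method to the implicit equation \eqref{implicit pedal ellipsoid}. Since $a^2x^2+b^2y^2+c^2z^2\geq 0$, taking the positive square root in \eqref{implicit pedal ellipsoid} — the branch for which the pedal surface encloses the origin, consistent with \eqref{ellipsoid equation2} — gives $x^2+y^2+z^2-kx=\sqrt{a^2x^2+b^2y^2+c^2z^2}$, so that the indicatrix is the level set $\{F=1\}$ of
\begin{equation*}
F(x,y,z)=\frac{x^2+y^2+z^2}{\sqrt{a^2x^2+b^2y^2+c^2z^2}+kx}.
\end{equation*}
One checks immediately that $F$ is positively $1$-homogeneous (degree $2$ over degree $1$), hence a genuine Minkowski norm in the fiber.

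Next I would read off the $(\alpha_1,\alpha_2,\beta)$ decomposition directly from this expression. Setting $\alpha_1:=\sqrt{x^2+y^2+z^2}$, $\alpha_2:=\sqrt{a^2x^2+b^2y^2+c^2z^2}$, and $\beta:=-kx$, the numerator is $\alpha_1^2$ and the denominator is $\alpha_2-\beta$, so $F=\frac{\alpha_1^2}{\alpha_2-\beta}$ as claimed, with $\alpha_1$ the Euclidean norm and $\alpha_2$ the norm of the diagonal quadratic form $\mathrm{diag}(a^2,b^2,c^2)$; these are genuinely distinct (not proportional) precisely when $a,b,c$ are not all equal, the equal case reducing to the slope metric of the sphere. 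Smoothly transporting this fiberwise Minkowski norm over a $3$-dimensional manifold $M$ — i.e. replacing the two constant quadratic forms by Riemannian metrics $\alpha_1,\alpha_2$ and the linear form by a $1$-form $\beta$ on $TM$, as in the sphere and ellipse cases — produces the Finsler structure of the stated type, completing the first assertion.

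For the strong convexity I would invoke the criterion \eqref{convex pedal surface}: the pedal surface \eqref{Ellipsoid Pedal} is strongly convex exactly when its Gauss curvature is positive, equivalently when $LN-M^2>0$ with $V=\nabla f/\|\nabla f\|$ and $L=\langle p_{uu},V\rangle$, $M=\langle p_{uv},V\rangle$, $N=\langle p_{vv},V\rangle$. The strategy mirrors the $2$-sphere computation: compute $\nabla f$ from \eqref{implicit pedal ellipsoid}, then the three second-order inner products $\langle p_{uu},\nabla f\rangle$, $\langle p_{uv},\nabla f\rangle$, $\langle p_{vv},\nabla f\rangle$ along \eqref{Ellipsoid Pedal}, and finally attempt to factor the resulting trigonometric polynomial so as to isolate a sign condition on $a,b,c,k$. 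The main obstacle is exactly this last step: whereas the sphere case collapsed to the clean product $\sin^2 v\,(3Akr+2k^2+r^2)(Ak+r)(2Ak+r)$ (with $A=\cos u\sin v$ and $r$ the radius), the ellipsoid destroys the rotational symmetry through the factor $\|\mathcal S_v\times\mathcal S_u\|$, so $LN-M^2$ no longer factors neatly and one must instead bound it uniformly over the compact parameter domain $(u,v)$. I would therefore not expect a single closed inequality generalizing $r>2k$; rather, one extracts sufficient conditions on $a,b,c,k$ guaranteeing positivity — consistent with the numerical evidence of Figure \ref{fig4} — which is precisely why the theorem is phrased as holding ``subject to some conditions''.
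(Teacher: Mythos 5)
Your proposal is correct and follows essentially the same route as the paper: Okubo's method applied to the implicit equation \eqref{implicit pedal ellipsoid} to read off the Minkowski norm $F=\frac{x^2+y^2+z^2}{\sqrt{a^2x^2+b^2y^2+c^2z^2}+kx}$ of type \eqref{Finsler for ellipse}, with the strong convexity conditions left unspecified (the paper likewise omits the long computation and supports the claim only by numerical simulation, as in Figure \ref{fig4}). Your additional remarks --- the choice of the positive square-root branch justified via \eqref{ellipsoid equation2}, the homogeneity check, and the explanation of why the $LN-M^2$ expression no longer factors as in the sphere case --- are sound refinements of the same argument rather than a different approach.
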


\begin{remark}
  Similarly with the sphere case, without giving here the concrete computations, one can easily see that the same formulas are true for the arbitrary dimensional case as well.
  
  The Finsler metric on an $n$-dimensional manifold $M$ whose indicatrix is given by the pedal hypersurface of an ellipsoid 
    with origin as pedal point is a slope type metric $F=\frac{\alpha_1^2}{\alpha_2-\beta}$, where $\alpha_1,\alpha_2$ are two different Riemannian metrics  on $M$ and $\beta$ a linear one form on $TM$. This Finsler metric is strongly convex for some further conditions on the constants giving the axes of the ellipsoid and the coordinates of its center.
%\end{theorem}
  \end{remark}

%%%%%%%%%%%%%%%%%%%%%%%%%%%%%%%%%%%%%%%%%%%%%%%%%%%%%%%%

\end{document}